\crefname{hypothesis}{Hypothesis}{Hypotheses}
\title{A Generalization of QR Factorization to Non-Euclidean Norms}
\author{Reid Atcheson\thanks{Numerical Algorithms Group Inc.
  (\email{reid.atcheson@nag.com}, \url{https://www.reidatcheson.com/}).}}
\DeclareMathOperator*{\argmin}{arg\,min}
\DeclareMathOperator*{\linearspace}{span}
\newcommand{\norm}[1]{\left\lVert#1\right\rVert}
\begin{document}
\maketitle

\begin{abstract}
  I propose a way to use non-Euclidean norms to formulate a QR-like factorization which can unlock interesting 
  and potentially useful properties of non-Euclidean norms - for example the ability of $l^1$ norm to suppresss outliers or promote sparsity.
  A classic QR factorization of a matrix $\mathbf{A}$ 
  computes an upper triangular matrix $\mathbf{R}$ and orthogonal matrix $\mathbf{Q}$ such that $\mathbf{A} = \mathbf{QR}$. To generalize
  this factorization to a non-Euclidean norm $\| \cdot \|$
  I relax the orthogonality requirement for $\mathbf{Q}$ and instead require it have condition number 
  $\kappa \left ( \mathbf{Q} \right )  = \| \mathbf{Q} ^{-1} \| \| \mathbf{Q} \|$ that is bounded independently of $\mathbf{A}$. I present the algorithm for computing $\mathbf{Q}$ and
  $\mathbf{R}$ and prove that this algorithm results in $\mathbf{Q}$ with the desired properties. I also prove that this algorithm generalizes
  classic QR factorization in the sense that when the norm is chosen to be Euclidean: $\| \cdot \|=\| \cdot \|_2$ then $\mathbf{Q}$ is orthogonal. Finally
  I present numerical results confirming mathematical results with $l^1$ and $l^{\infty}$ norms. I supply Python code for experimentation.
\end{abstract}

\begin{keywords}
  QR factorization
\end{keywords}

\section{Introduction}
\label{sec:introduction}
The QR factorization allows for highly stable, robust, and efficient matrix factorization with
beneficial properties to many areas of numerical linear algebra. The factorization may be 
implemented using only stable operations such as Householder reflectors or Givens rotations \cite{high:ASNA2} and
highly efficient blocked implementations can use level-3 BLAS \cite{golub2013matrix},\cite{lapack}. The key property
of the factorization is orthogonality. Orthgonality means that if we QR factorize a matrix $\mathbf{A}=\mathbf{Q}\mathbf{R}$ then
$\mathbf{Q}^T = \mathbf{Q}^{-1}$. This property has enormous utility in numerical linear algebra and it plays a significant role
in almost every eigenvalue algorithm \cite{templateseig} as well as least-squares solvers \cite{golub2013matrix}. Despite huge
success of the QR factorization it has resisted generalization in large part because the orthgonality property tightly bound 
to the underlying Euclidean norm $\norm{\cdot}_2$ and is almost meaningless without it. Indeed a well-known fact is that
the constraint of orthogonality on $\mathbf{Q}$ almost completely determines the result of the algorithm, leaving little room
for alterations that could benefit a new domain - for example by using a norm with domain-specific advantages over the Euclidean norm.

Non-Euclidean norms can sometimes provide domain-specific benefits. It is now well known for example that the $l^1$ norm,
when used for regression (where it is called "Least Absolut Deviations") is far less sensitive outliers than the $l^2$ norm \cite{birkes2011alternative},
and the $l^1$ norm also can promote sparsity compared to the $l^2$ norm \cite{Donoho2006ForML}. The $l^{\infty}$ norm also has utility
for minimax problems \cite{minimax}. These properties of some non-Euclidean norms have recently resulted in new matrix factorizations
using these norms as opposed to the $l^2$ norm, for example $l^1$ norm based SVD factorizations \cite{ke2005robust}. I determined to 
investigate whether a we can similarly modify a QR factorization to inherit benefits from non-Euclidean norms.

As already mentioned the orthogonality property of the QR factorization nearly completely determines it and it simultaneously
locks us in to the $l^2$ norm, thus we need a way to relax this condition but find an analogous condition
which provides similar utility. For this work I focus on the \emph{conditioning} of $\mathbf{Q}$ rather than orthogonality.
An orthogonal matrix has condition number (in the $l^2$ norm) equal to $1$. Thus I present algorithm \ref{alg:genqr} which
for any prescribed norm can produce a QR-like factorization where the resulting matrix $\mathbf{Q}$ 
is well-conditioned in the supplied norm. One key contribution of this work 
is the statement and proof of this key conditioning theorem \ref{thm:inverse} which
bounds the norm of the inverse of $\mathbf{Q}$. I also show in theorem \ref{thm:classicqr} that this algorithm
generalizes the classic QR factorization in the sense that if the prescribed norm
is the Euclidean norm then $\mathbf{Q}$ is orthogonal. I follow the mathematical proofs with numerical experiments using the $l^1$ and $l^{\infty}$ norms.

\section{Main results}

The main results of this work are mathematical with some light numerical experiments for illustration purposes.
In this section I first present the algorithm \ref{alg:genqr} below. I then state and prove key bounds
on the resulting matrix $\mathbf{Q}.$ I first state and prove the forward bound \ref{thm:forward} which
shows that while $\mathbf{Q}$ does not have orthogonality, it still does not increase the norm of vectors
significantly when applied to them (an orthogonal matrix by comparison does not increase the norm of a vector
at all). The next theorem \ref{thm:inverse} shows a similar kind of bound but in the other direction: how much
can it \emph{shrink} an input vector. As usual an orthogonal matrix will not shrink an input vector at all,
but since the algorithm \ref{alg:genqr} does not guarantee orthogonality I instead provide bounds that
constrain its conditioning.

I freely use the following vector norms throughout:
\begin{definition}
  Suppose that $m$ is a positive integer and that $\mathbf{x}\in\mathbb{R}^m$. Define the following:
  \begin{align}
    \norm{x}_1        & = \sum_{j=1}^m \left | x_j \right | & \text{   } l^1-\text{norm} \\
    \norm{x}_2        & = \sqrt{\sum_{j=1}^m x_j^2}         & \text{   } l^2-\text{norm} \\
    \norm{x}_{\infty} & = \max_j \left | x_j \right |     & \text{   } l^{\infty}-\text{norm}
  \end{align}
\end{definition}

The core algorithm under investigation follows.For simplicity of presentation I focus
on the case where the input matrix $\mathbf{A}$ is full-rank and square, but I also show that
the algorithm and subsequent theorems may be trivially extended to low-rank or rectangular cases
in section \ref{sec:lowrank}.

\begin{algorithm}[H] 
\caption{Generalized QR Factorization}
\label{alg:genqr}

  Start with an input $A\in\mathbb{R}^{m\times m}$ and any norm 
  $\| \cdot \|$ on $ \mathbb{R} ^ m $.

  I use

  \begin{align}
    A &= (A_1,A_2,A_3,\ldots,A_m) \\
    Q &= (Q_1,Q_2,Q_3,\ldots,Q_m) 
  \end{align}

  to represent $A$ and $Q$ by their respective columns. Furthermore I define 
  $A^i,Q^i \in \mathbb{R} ^ {m \times i}$ as the first $i$ columns of $A,Q$ respectively:

  \begin{displaymath}
    Q^i = (Q_1,Q_2,\ldots,Q_i).
    A^i = (A_1,A_2,\ldots,A_i).
  \end{displaymath}

  I now define the $Q$ and $R$ factors inductively as follows:

  \begin{align} 
    \label{eq:recurrencebase1}
    Q_1 &= A_1 / \| A_1 \| \\
    \label{eq:recurrencebase2}
    R(1,1) &= \frac{1}{\| A_1 \|}
  \end{align}

  and for any $1\leq j \leq m-1$ I define

  \begin{align}
    \label{eq:recurrence1}
    c^j &=\argmin _ {c^j \in \mathbf{R}^j }\|  A_{j} - Q^j c^j \|  \\
    \label{eq:recurrence2}
    \gamma_j &= \| A_{j} - Q^jc^j \| \\
    \label{eq:recurrence3}
    Q^{j+1} &= (Q^j, \gamma_j ^{-1} ( A_{j} - Q^j c^j) ) \\
    \label{eq:recurrence4}
    R(j,1:j-1) &= c^j \\
    \label{eq:recurrence5}
    R(j,j) &= \gamma_j 
  \end{align}

\end{algorithm}

\begin{theorem}[Generalized QR factorization]\label{thm:qrfact}
  Suppose that $m$ is a positive integer, that $\mathbf{A} \in \mathbb{R}^{m \times m}$ is full-rank, that $\|\cdot\|$ is a norm, and that $\mathbf{Q},\mathbf{R}$ are output
  from algorithm \ref{alg:genqr}. Then
  \begin{displaymath} \label{eq:qrequation}
    \mathbf{A} = \mathbf{QR}
  \end{displaymath}
\end{theorem}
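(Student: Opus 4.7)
The plan is to prove $\mathbf{A} = \mathbf{QR}$ column by column, by induction on the column index $j$. The induction hypothesis I would carry is stronger than the bare conclusion: at stage $j$ I maintain that (i) the columns $Q_1,\ldots,Q_j$ produced so far are linearly independent and each has unit norm, (ii) their span coincides with $\text{span}(A_1,\ldots,A_j)$, and (iii) $\mathbf{A}_k = \mathbf{Q}\,R(:,k)$ for every $k \le j$. Property (ii) is what keeps the inductive step feasible, while (iii) is the target. The base case follows directly from the initialization \eqref{eq:recurrencebase1}--\eqref{eq:recurrencebase2}: $\|A_1\| > 0$ because $\mathbf{A}$ is full rank, so $Q_1 = A_1/\|A_1\|$ is well-defined, has unit norm, and its span equals $\text{span}(A_1)$.

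For the inductive step, three things must be checked before the recurrence even makes sense. First, the $\argmin$ in \eqref{eq:recurrence1} must actually be attained. I would argue this by combining continuity of $c \mapsto \|A_j - Q^j c\|$ with coercivity: linear independence of $Q_1,\ldots,Q_j$ (from hypothesis (i)) makes the map $c \mapsto Q^j c$ injective, and any injective linear map on a finite-dimensional space sends $\|c\| \to \infty$ to $\|Q^j c\| \to \infty$ regardless of which norms are used on either side. Weierstrass then produces a minimizer. Second, I need $\gamma_j \neq 0$: otherwise $A_j$ would lie in $\text{span}(Q^j) = \text{span}(A_1,\ldots,A_{j-1})$ by (ii), contradicting the full-rank hypothesis on $\mathbf{A}$. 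Third, $\gamma_j > 0$ licenses the division in \eqref{eq:recurrence3}; the newly adjoined column has unit norm by construction and lies outside $\text{span}(Q^j)$ for the same full-rank reason, so invariants (i) and (ii) propagate to stage $j+1$.

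With the invariants in hand, the column identity is almost automatic: rearranging \eqref{eq:recurrence3} expresses the freshly processed column of $\mathbf{A}$ as a linear combination of the columns of $\mathbf{Q}$ with coefficients $c^j$ above the diagonal and $\gamma_j$ on the diagonal (and zero below, by construction of $R$ in \eqref{eq:recurrence4}--\eqref{eq:recurrence5}), which is exactly $\mathbf{Q}\,R(:,j)$. The only step that is not pure bookkeeping is establishing existence of the $\argmin$ in a general norm; I expect that to be the main conceptual hurdle, while the rest of the argument is structurally the same induction that underlies classical Gram--Schmidt, just with the role of orthogonal projection replaced by best approximation in $\|\cdot\|$.
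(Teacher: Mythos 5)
Your proof is correct and follows essentially the same route as the paper: induction on the column index, with the rearranged recurrence \eqref{eq:recurrence3} giving $A_j = \mathbf{Q}\,R(:,j)$ at each step. The only difference is that you additionally verify well-posedness (existence of the $\argmin$ via coercivity, and $\gamma_j > 0$ from full rank), which the paper's proof tacitly assumes; this is a welcome refinement rather than a different argument.
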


\begin{proof}
  I proceed by mathematical deduction. Note that $A_1 = Q^1R(1,1)$ follows directly
  from the base case definitions of these quantities \ref{eq:recurrencebase1},\ref{eq:recurrencebase2}. Now assume $A^j = Q^jR(1:j,1:j)$  
  for some $j>1.$ Then from \ref{eq:recurrence1},\ref{eq:recurrence2},\ref{eq:recurrence3},\ref{eq:recurrence4},\ref{eq:recurrence5} we have

  \begin{align*}
    Q^{j+1}R(1:j+1,1:j+1) &= (Q^j,Q_{j+1})
    \begin{bmatrix}
       R(1:j,1:j) & c_j^k \\
      0              & \gamma_j
    \end{bmatrix} \\
    &= (Q^j R(1:j,1:j),Q_{j}c^j + \gamma_j^{-1} Q_{j+1}) \\
    &= (Q^j R(1:j,1:j),Q_{j}c_j^k + \gamma_j^{-1}\gamma_j \left ( A_j - Q^j c^j \right ) \\
    &= (A^j,A_{j+1}) \\
    &= A^{j+1}
  \end{align*}

  establishing the equation

  \begin{align*}
    A^j = Q^j R(1:j,1:j)
  \end{align*}

  for all nonnegative integers $j\leq m.$ Taking $j=m$ proves \ref{eq:qrequation}

\end{proof}
The first theorem related to conditioning of $\mathbf{Q}$ establishes a simple forward bound on its norm.
\begin{theorem}[Forward bounds on Q]\label{thm:forward}
  Suppose that $m$ is a positive integer and that $\|\cdot\|$ is a norm on $\mathbb{R}^m$.
  Then there exists $C_1>0$ such that for every full-rank $\mathbf{A} \in \mathbb{R}^{m \times m}$ and every $\mathbf{x}\in\mathbb{R}^m$ we have

  \begin{displaymath}
    \| \mathbf{Q}\mathbf{x} \| \leq C_1 \|\mathbf{x}\|
  \end{displaymath}

  where $\mathbf{Q}$ is output from algorithm \ref{alg:genqr}
\end{theorem}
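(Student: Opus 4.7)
The plan is to exploit the fact that, by construction, every column $Q_j$ produced by algorithm \ref{alg:genqr} has unit norm in $\|\cdot\|$. For $j=1$ this is immediate from \eqref{eq:recurrencebase1}. For $j\geq 2$, equation \eqref{eq:recurrence3} defines the new column as $\gamma_j^{-1}(A_j - Q^j c^j)$, and $\gamma_j$ is exactly the norm of this vector by \eqref{eq:recurrence2}, so the new column has norm one. Note that $\gamma_j\neq 0$ because $\mathbf{A}$ is full-rank: if $A_j - Q^j c^j = 0$ then $A_j$ lies in the span of the earlier $Q$-columns, which (by theorem \ref{thm:qrfact} applied to the partial factorization) equals the span of the earlier $A$-columns, contradicting full-rankness.

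With unit-norm columns in hand, I would first use the triangle inequality on $\mathbf{Q}\mathbf{x} = \sum_{j=1}^m x_j Q_j$ to get
\begin{displaymath}
  \|\mathbf{Q}\mathbf{x}\| \;\leq\; \sum_{j=1}^m |x_j|\,\|Q_j\| \;=\; \sum_{j=1}^m |x_j| \;=\; \|\mathbf{x}\|_1.
\end{displaymath}
The right-hand side is the $l^1$-norm of $\mathbf{x}$, independent of $\mathbf{A}$.

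Next I would invoke the equivalence of norms on the finite-dimensional space $\mathbb{R}^m$: there exists a constant $C_1 = C_1(m,\|\cdot\|) > 0$, depending only on $m$ and the prescribed norm (not on $\mathbf{A}$), such that $\|\mathbf{x}\|_1 \leq C_1 \|\mathbf{x}\|$ for all $\mathbf{x}\in\mathbb{R}^m$. Combining these two inequalities yields the theorem.

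I do not expect any real obstacle here; the only point that deserves care is making sure $C_1$ genuinely depends only on $m$ and $\|\cdot\|$ and not on $\mathbf{A}$, which is immediate since the equivalence constant between any two norms on $\mathbb{R}^m$ is a property of the norms alone. If the author wishes to sharpen the constant, one could alternatively write $C_1 = \max_{\|\mathbf{x}\|\leq 1} \|\mathbf{x}\|_1$, which is finite by compactness of the unit ball in $\|\cdot\|$.
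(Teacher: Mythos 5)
Your proposal is correct and follows essentially the same route as the paper: triangle inequality with unit-norm columns to bound $\|\mathbf{Q}\mathbf{x}\|$ by $\|\mathbf{x}\|_1$, then norm equivalence on $\mathbb{R}^m$ to obtain an $\mathbf{A}$-independent constant $C_1$. Your added remarks on $\gamma_j\neq 0$ and the explicit choice $C_1=\max_{\|\mathbf{x}\|\leq 1}\|\mathbf{x}\|_1$ only make the same argument more careful.
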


\begin{proof}
  Suppose that $x\in\mathbb{R}^m$.
  \begin{align*}
    \|Qx\| &=    \left \| \sum _{i=1}^m Q_i x_i \right \| \\
           &\leq  \sum_{i=1}^m \| Q_i x_i \| \\
           &\leq  \sum_{i=1}^m \| Q_i\| \left | x_i \right | \\
           &=  \| x \| _1 
  \end{align*}
  Finally we may apply norm
  equivalence between all norms in finite dimensional spaces to choose $C_1>0$ such that
  $\| x \| _1 \leq C_1 \|x \|$ holds for all $x$ 
\end{proof}
\begin{remark}
  The constant $C_1$ produced above is independent of $\mathbf{A}$ but still 
  (likely) depends on the dimension $m$ of the space because 
  of the use of norm equivalence.
\end{remark}

\begin{theorem}[Inverse bounds on Q]\label{thm:inverse}
  Suppose that $m$ is a positive integer and that $\|\cdot\|$ is a norm on $\mathbb{R}^m$.
  Then there exists $C_2>0$ such that for every full-rank $\mathbf{A} \in \mathbb{R}^{m \times m}$ and every $\mathbf{x}\in\mathbb{R}^m$ we have
  \begin{displaymath}
    C_2 \| \mathbf{x} \| \leq \| \mathbf{Q}\mathbf{x} \|
  \end{displaymath}
  where $\mathbf{Q}$ is output from algorithm \ref{alg:genqr}
\end{theorem}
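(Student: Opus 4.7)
The plan is to exploit the geometric content of algorithm \ref{alg:genqr}: each $Q_{j+1}$ is, by construction, the normalized residual of the best $\|\cdot\|$-approximation of a column of $\mathbf{A}$ by the previously constructed columns of $\mathbf{Q}$. I expect this to force each $Q_{j+1}$ to be a unit vector whose distance (in the prescribed norm) from $\linearspace(Q_1,\ldots,Q_j)$ is exactly $1$, which reduces the theorem to a clean geometric statement about the columns of $\mathbf{Q}$.

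The first step is to make that distance property precise. For $1\le j \le m-1$, the minimality of $\gamma_j$ in (\ref{eq:recurrence1})--(\ref{eq:recurrence2}) together with $\gamma_j Q_{j+1} = A_j - Q^j c^j$ from (\ref{eq:recurrence3}) gives, for any $d \in \mathbb{R}^j$,
\begin{displaymath}
  \|Q_{j+1} - Q^j d\| \;=\; \gamma_j^{-1}\|A_j - Q^j(c^j + \gamma_j d)\| \;\ge\; \gamma_j^{-1}\gamma_j \;=\; 1.
\end{displaymath}
Taking $d = 0$ in particular recovers $\|Q_{j+1}\| \ge 1$; combined with the explicit expression for $Q_{j+1}$ one in fact has equality, and the base case $Q_1 = A_1/\|A_1\|$ handles the first column. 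So every column of $\mathbf{Q}$ is a unit vector at distance at least $1$ from the span of its predecessors. An immediate consequence is a one-coordinate lower bound: for any $\mathbf{x}\in\mathbb{R}^m$, if $k$ is the largest index with $x_k\ne 0$, then writing $\mathbf{Q}\mathbf{x} = x_k(Q_k - Q^{k-1} d)$ for a suitable $d \in \mathbb{R}^{k-1}$ and invoking the distance bound gives $\|\mathbf{Q}\mathbf{x}\| \ge |x_k|$.

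To propagate this one-coordinate bound to a genuine lower bound on $\|\mathbf{x}\|$, I would peel. Set $y_0 = \mathbf{Q}\mathbf{x}$ and $y_{s+1} = y_s - x_{k_s} Q_{k_s}$, where $k_s$ is the largest nonzero index remaining in $y_s$. The one-coordinate bound applied to $y_s$ gives $|x_{k_s}| \le \|y_s\|$, and the triangle inequality with $\|Q_{k_s}\| = 1$ then gives $\|y_{s+1}\| \le 2\|y_s\|$, so by induction $|x_{k_s}| \le 2^s\|\mathbf{Q}\mathbf{x}\|$. Since at most $m-1$ peeling steps can occur, $\max_i |x_i| \le 2^{m-1}\|\mathbf{Q}\mathbf{x}\|$. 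A final appeal to norm equivalence in $\mathbb{R}^m$ (as in theorem \ref{thm:forward}) converts this $l^{\infty}$ bound on the coordinates into $C_2\|\mathbf{x}\| \le \|\mathbf{Q}\mathbf{x}\|$ with $C_2 > 0$ depending only on $m$ and $\|\cdot\|$ and not on $\mathbf{A}$, as required.

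The substantive step is the distance property; once it is in hand, the rest is triangle-inequality bookkeeping and a geometric series. The $C_2$ produced is exponentially small in $m$ and almost certainly far from tight, but this is harmless here because the theorem only asserts the existence of some $C_2 > 0$ independent of $\mathbf{A}$.
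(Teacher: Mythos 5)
Your proof is correct, and it shares the paper's essential first ingredient while differing genuinely in how that ingredient is amplified into the full bound. The distance-to-span property you derive from the optimality of $c^j$ is exactly the content of the paper's Lemma \ref{lem:optim}: the paper states it as $\|\sum_{j=1}^k Q_j x_j\| \ge |x_k|$ and proves it by factoring out $|\gamma_k^{-1}x_k|$ and invoking the minimality of $\gamma_k$, which is the same computation as your $\mathbf{Q}\mathbf{x} = x_k(Q_k - Q^{k-1}d)$ with $\|Q_k - Q^{k-1}d\| \ge 1$ (both arguments implicitly use $\gamma_j>0$, guaranteed by full rank). Where you diverge is the propagation step. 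The paper introduces a separate decay lemma (Lemma \ref{lem:decay}), chooses the largest index $k$ with $|x_k| \ge 2^{-k}\|\mathbf{x}\|_\infty$, splits $\mathbf{Q}\mathbf{x}$ into head and tail, checks nonnegativity so the absolute value in the reverse triangle inequality can be dropped, and sums a geometric series to reach $\|\mathbf{Q}\mathbf{x}\| \ge 2^{-m}\|\mathbf{x}\|_\infty$. You instead peel off the highest remaining nonzero coordinate one step at a time: the one-coordinate bound controls each peeled coefficient by $\|y_s\|$, the triangle inequality with $\|Q_{k_s}\|=1$ gives at most a factor of $2$ growth per step, and after at most $m-1$ steps you get $\|\mathbf{x}\|_\infty \le 2^{m-1}\|\mathbf{Q}\mathbf{x}\|$. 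Both routes finish with norm equivalence and both constants are exponentially small in $m$ (yours is better by a single factor of $2$, which is immaterial). Your peeling argument is arguably cleaner bookkeeping: it needs no decay-threshold index, no tail lemma, and no nonnegativity verification, all of which the paper's proof must handle explicitly.
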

Theorem \ref{thm:inverse} is a little more involved than those that preceeded it, so I organize its proof into a sequence of lemmas followed by
main proof. The lemmas effectively establish partial bounds which if combined carefully result in the complete inverse bound.
The first lemma establishes a partial inverse bound on $\mathbf{Q}$ using the optimality properties of its columns.
\begin{lemma}[An optimality property of $\mathbf{Q}$]\label{lem:optim}
  Suppose that $m$ is a positive integer, that $\| \cdot \|$ is a norm on $\mathbb{R}^m$, and that $\mathbb{A}\in\mathbb{R}^{m\times m}$.
  Suppose further that $k$ is such that $0\leq k \leq m$ and that $x_1,\ldots,x_k$ are real numbers such that $x_k \neq 0$. Then
  \begin{displaymath}
    \left \| \sum_{j=1}^k Q_k x_k \right \| \geq |x_k|
  \end{displaymath}
  where $\mathbf{Q}$ is produced by algorithm \ref{alg:genqr}
\end{lemma}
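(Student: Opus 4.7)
The plan is to exploit the defining optimality property of the column $Q_k$: by the algorithm, $Q_k$ is, up to a positive normalizing scalar $\gamma_{k-1}$, the residual of the best approximation (in the norm $\|\cdot\|$) of $A_k$ by linear combinations of $Q_1,\ldots,Q_{k-1}$. That is, $\gamma_{k-1} Q_k = A_k - Q^{k-1} c^{k-1}$ where $c^{k-1} \in \mathbb{R}^{k-1}$ achieves the minimum of $\|A_k - Q^{k-1} c\|$ over $c \in \mathbb{R}^{k-1}$. I want to manipulate an arbitrary combination $\sum_{j=1}^k Q_j x_j$ into a form whose norm is controlled by this minimum.

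First, since $x_k \neq 0$, I divide through by $x_k$ so that it suffices to prove
\begin{displaymath}
  \left\| \frac{1}{x_k}\sum_{j=1}^k Q_j x_j \right\| \;\geq\; 1.
\end{displaymath}
Next, I substitute the recurrence expression for $Q_k$ and collect the remaining $Q_j$ terms. Writing $x^{k-1} = (x_1,\ldots,x_{k-1})^T$, this gives
\begin{displaymath}
  \frac{1}{x_k}\sum_{j=1}^k Q_j x_j
  \;=\; Q^{k-1}\!\left( \frac{x^{k-1}}{x_k} \right) + \gamma_{k-1}^{-1}\bigl(A_k - Q^{k-1} c^{k-1}\bigr)
  \;=\; \gamma_{k-1}^{-1}\bigl(A_k - Q^{k-1} \tilde c\bigr),
\end{displaymath}
where $\tilde c = c^{k-1} - \gamma_{k-1}\, x^{k-1}/x_k \in \mathbb{R}^{k-1}$.

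At this point the optimality of $c^{k-1}$ kicks in: since $\tilde c$ is an element of $\mathbb{R}^{k-1}$,
\begin{displaymath}
  \|A_k - Q^{k-1}\tilde c\| \;\geq\; \|A_k - Q^{k-1} c^{k-1}\| \;=\; \gamma_{k-1}.
\end{displaymath}
Dividing by $\gamma_{k-1}>0$ and then multiplying the resulting inequality by $|x_k|$ yields the claim. The only potential obstacle is the algebraic rearrangement that produces the vector $\tilde c$ in the right form; once that is in hand, the optimality of $c^{k-1}$ closes the argument immediately, and no appeal to norm equivalence or dimension-dependent constants is needed, which is important for this lemma to be usable in the main proof of Theorem \ref{thm:inverse}.
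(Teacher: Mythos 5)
Your proof is correct and follows essentially the same route as the paper's own argument: substitute the recurrence $Q_k=\gamma^{-1}(A_k-Q^{k-1}c^{k-1})$, absorb the remaining $Q_j$ terms into a competing coefficient vector, and invoke the optimality of $c^{k-1}$ to bound the norm below by $\gamma$, hence by $|x_k|$ after rescaling. Your bookkeeping (dividing by $x_k$ first and naming $\tilde c$ explicitly) is in fact a bit cleaner than the paper's in-line rearrangement, but the underlying idea is identical.
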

\begin{proof}[Proof of lemma \ref{lem:optim}]
  From the inductive definition \ref{eq:recurrence3} of $\mathbf{Q}$ we have
  \begin{align*}
    \norm{\sum_{j=1}^k Q_j x_j } &= \norm{\sum_{j=1}^{k-1} Q_j x_j + Q_kx_k} \\
                                 &= \norm{\sum_{j=1}^{k-1} Q_j x_j +  \gamma_k^{-1}\left( A_k - \sum_{j=1}^{k-1}Q_jc_j^k \right)x_k } \\
  \end{align*}
  where $c^k$ are coefficients which solve the minimization problem \ref{eq:recurrence1}. We may rearrange terms as follows
  \begin{align*}
    &\norm{\sum_{j=1}^{k-1} Q_j x_j +  \gamma_k^{-1}\left( A_k - \sum_{j=1}^{k-1}Q_jc_j^k \right)x_k }  \\
    &= \norm{ \sum_{j=1}^{k-1}Q_j (x_j - \gamma_k^{-1}c_j^kx_k)+\gamma _k ^{-1} A_n x_k}\\
    &= \left | \gamma _k ^{-1} x_k \right |  \norm{ \gamma_k x_k^{-1} \sum_{j=1}^{k-1}Q_j (x_j - \gamma_k^{-1}c_j^kx_k)+ A_k }\\
    &\geq \left | \gamma _k ^{-1} x_k \right | \norm{ \sum_{j=1}^{k-1}Q_jc_j^k - A_j} \\
    &= \left | x_k \right |
  \end{align*}
  the inequality follows from optimality properties of $c^k$ as specified in \ref{eq:recurrence1} and the final equation
  from the definition of $\gamma _k $ in \ref{eq:recurrence2}.
\end{proof}
The next lemma establishes a partial bound for $\mathbf{Q}$ when the vector $\mathbf{x}$ satisfies a decay property.
\begin{lemma}[An inequality dependent on certain decay property]\label{lem:decay}
  Suppose that $m$ is a positive integer, that $\| \cdot \|$ is a norm on $\mathbb{R}^m$, and that $\mathbb{A}\in\mathbb{R}^{m\times m}$.
  Suppose further that $k$ is such that $0\leq k \leq m$ and that $\mathbf{x}\in \mathbb{R}^m$ satisfies the following decay property:
  \begin{displaymath}
    \left | x_j \right | \leq \frac{1}{2^j}\norm{\mathbf{x}}_{\infty} \text{   } (j=k,\ldots,m)
  \end{displaymath}
  Then we have
  \begin{displaymath}
    \norm{\sum_{j=k}^m Q_j x_j } \leq \sum_{j=k}^m \frac{1}{2^k} \norm{\mathbf{x}}_{\infty}
  \end{displaymath}
  where $\mathbf{Q}$ is produced by algorithm \ref{alg:genqr}
\end{lemma}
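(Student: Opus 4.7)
The plan is to reduce the claim to a single triangle-inequality estimate once I have established that every column of $\mathbf{Q}$ produced by Algorithm \ref{alg:genqr} is a unit vector in the prescribed norm. That normalization is the only non-tautological ingredient, so I would prove it first as a short standalone observation: from \eqref{eq:recurrencebase1} we have $\norm{Q_1} = \norm{A_1}/\norm{A_1} = 1$, and for $j \geq 1$ the recurrence \eqref{eq:recurrence3} sets $Q_{j+1} = \gamma_j^{-1}(A_j - Q^j c^j)$, where \eqref{eq:recurrence2} defines $\gamma_j$ to be precisely the norm of the vector being divided, so $\norm{Q_{j+1}} = 1$ as well.

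With that in place I would carry out the main estimate in a single display. Applying the triangle inequality, pulling the scalar $x_j$ out of the norm, substituting $\norm{Q_j} = 1$, and then using the hypothesized decay $|x_j| \leq 2^{-j}\norm{x}_\infty$ for $j = k,\ldots,m$, I obtain
\begin{align*}
  \norm{\sum_{j=k}^m Q_j x_j}
  &\leq \sum_{j=k}^m |x_j|\,\norm{Q_j}
   = \sum_{j=k}^m |x_j|
   \leq \sum_{j=k}^m 2^{-j}\,\norm{x}_\infty
   \leq \sum_{j=k}^m 2^{-k}\,\norm{x}_\infty,
\end{align*}
where the final step uses $2^{-j} \leq 2^{-k}$ for $j \geq k$, matching the bound as stated in the lemma.

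There really is no obstacle here: the result is entirely a consequence of column normalization plus the assumed geometric decay of $|x_j|$. Nothing about the optimality of the coefficients $c^j$ or the structure of the minimization in \eqref{eq:recurrence1} enters the argument, which is why this lemma is stated separately from Lemma \ref{lem:optim}; the two will later be combined in the main proof of Theorem \ref{thm:inverse} to cover the two complementary regimes (a large coordinate caught by Lemma \ref{lem:optim} versus a tail captured here). The only small care point is to make sure I do not accidentally invoke the decay hypothesis for indices below $k$, since it is only asserted on $\{k,\ldots,m\}$; this is automatic because the sum starts at $k$.
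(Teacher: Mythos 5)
Your proof is correct and follows exactly the paper's argument: the paper's own proof is a one-line appeal to the triangle inequality, the fact that $\norm{Q_j}=1$ for every column produced by algorithm \ref{alg:genqr}, and the decay hypothesis on $\mathbf{x}$. You simply spell out these same steps (including the normalization of the columns) in more detail, so there is nothing further to add.
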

\begin{proof}[Proof of lemma \ref{lem:decay}]
  This follows by application of triangle inequality, recognizing that $\norm{Q_j}=1$ for all $j$, and then applying the decay property of $\mathbf{x}$
\end{proof}

Now I prove the main fact below.
\begin{proof}[Proof of theorem \ref{thm:inverse}]
  For this proof I use the $l^{\infty}$ norm defined as:
  \begin{displaymath}
    \norm{\mathbf{y}}_{\infty} = \max_{j} \left | y_j \right |
  \end{displaymath}
  Suppose that $\mathbf{A}\in\mathbb{R}^{m\times m}$ and that $\mathbf{x}\in\mathbb{R}^m.$ Define the integer $k$  to be the largest integer 
  that satisfies $0\leq k \leq m$ and the following inequality:
  \begin{displaymath}
    |x_k| \geq \frac{1}{2^k} \norm{\mathbf{x}}_{\infty}
  \end{displaymath}
  By definition of $k$ we see that $\mathbf{x}$ satisfies the decay property stated in lemma \ref{lem:decay} for $x_{k+1},\ldots,x_{m}$. Before
  proceeding with the key inequality I first establish nonnegativity of a key term so that we may later remove absolute values from it:
  \begin{align*}
     \norm{\sum_{j=k+1}^m Q_j x_j} &\leq \sum_{j=k+1}^m \frac{1}{2^j} \norm{\mathbf{x}}_{\infty} \\
                                   &\leq \frac{1}{2^k}\norm{\mathbf{x}}_{\infty} \\
                                   &\leq \left | x_k \right | \\
                                   &\leq \norm{\sum_{j=1}^k Q_j x_j}  
  \end{align*}
  Thus we have
  \begin{align*}
  \left | \norm{\sum_{j=1}^k Q_j x_j} - \norm{\sum_{j=k+1}^m Q_j x_j} \right | &= \norm{\sum_{j=1}^k Q_j x_j} - \norm{\sum_{j=k+1}^m Q_j x_j} \\
                                                                               &\geq 0
  \end{align*}

  Thus by applying
  the reverse triangle inequality, lemma \ref{lem:decay}, and then lemma \ref{lem:optim}, we find
  \begin{align*}
    \norm{\mathbf{Q}\mathbf{x}} &= \norm{\sum_{j=1}^m Q_j x_j}  \\
                                &= \norm{\sum_{j=1}^k Q_j x_j + \sum_{j=k+1}^m Q_j x_j} \\
                                &\geq  \left | \norm{\sum_{j=1}^k Q_j x_j}  - \norm{\sum_{j=k+1}^m Q_j x_j} \right | \\
                                &=  \norm{\sum_{j=1}^k Q_j x_j}  - \norm{\sum_{j=k+1}^m Q_j x_j} \\
                                &\geq  \norm{\sum_{j=1}^k Q_j x_j}  -  \sum_{j=k+1}^m \frac{1}{2^j}\norm{x}_{\infty} \\
                                &\geq  \left | x_k \right |  -  \sum_{j=k+1}^m \frac{1}{2^j}\norm{\mathbf{x}}_{\infty} \\
                                &\geq  \frac{1}{2^k}\norm{\mathbf{x}}_{\infty}   -  \sum_{j=k+1}^m \frac{1}{2^j}\norm{\mathbf{x}}_{\infty} \\
                                &= (\frac{1}{2^k} - \sum_{j=k+1}^m \frac{1}{2^j})\norm{\mathbf{x}}_{\infty} \\
                                &\geq \frac{1}{2^m}\norm{\mathbf{x}}_{\infty} 
  \end{align*}
  Next we may apply norm equivalence in finite dimensional spaces to choose a constant $K>0$ such that
  \begin{displaymath}
    \norm{\mathbf{y}} \leq  K \norm{\mathbf{y}}_{\infty}
  \end{displaymath}
  holds for all $\mathbf{y} \in \mathbb{R}^m$. Finally by defining $C_2 = K\frac{1}{2^m}$ we see that
  \begin{displaymath}
    \norm{\mathbf{Q}\mathbf{x}} \geq C_2 \norm{\mathbf{x}}
  \end{displaymath}
  completing the proof
\end{proof}

\begin{corollary}[Condition number bounds for Q]\label{thm:condition}
  Suppose that $m$ is a positive integer and that $\|\cdot\|$ is a norm on $\mathbb{R}^m$.
  Then there exists $C>0$ such that for every full-rank $\mathbf{A} \in \mathbb{R}^{m \times m}$ and every $\mathbf{x}\in\mathbb{R}^m$ we have
  \begin{displaymath}
    \kappa \left (\mathbf{Q} \right) = \| \mathbf{Q}^{-1} \| \| \mathbf{Q} \| \leq C
  \end{displaymath}
  where $\mathbf{Q}$ is output from algorithm \ref{alg:genqr}
\end{corollary}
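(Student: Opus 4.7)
The plan is to recognize that this corollary is an immediate consequence of the two preceding theorems, so essentially no new ideas are required; all the real work already lives in Theorem \ref{thm:forward} and Theorem \ref{thm:inverse}. What remains is to translate the pointwise vector inequalities from those theorems into operator norm inequalities and multiply.

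First, I would invoke Theorem \ref{thm:forward} directly: since $\|\mathbf{Q}\mathbf{x}\| \leq C_1 \|\mathbf{x}\|$ holds for every $\mathbf{x}\in\mathbb{R}^m$, taking the supremum over unit vectors in the induced operator norm gives $\|\mathbf{Q}\| \leq C_1$, with $C_1$ independent of $\mathbf{A}$. Next, I would turn Theorem \ref{thm:inverse} into a bound on $\|\mathbf{Q}^{-1}\|$ by a substitution: since $\mathbf{A}$ is full rank, Theorem \ref{thm:qrfact} implies $\mathbf{Q}$ is invertible, so given any $\mathbf{y}\in\mathbb{R}^m$ I may set $\mathbf{x}=\mathbf{Q}^{-1}\mathbf{y}$. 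The inequality $C_2\|\mathbf{x}\| \leq \|\mathbf{Q}\mathbf{x}\|$ then rearranges into $\|\mathbf{Q}^{-1}\mathbf{y}\| \leq C_2^{-1}\|\mathbf{y}\|$, and taking the supremum over unit $\mathbf{y}$ yields $\|\mathbf{Q}^{-1}\| \leq C_2^{-1}$.

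Finally I would multiply the two bounds to obtain $\kappa(\mathbf{Q}) = \|\mathbf{Q}^{-1}\|\|\mathbf{Q}\| \leq C_1/C_2$, and define $C = C_1/C_2$. Since both $C_1$ and $C_2$ are provided by the earlier theorems as constants that depend only on $m$ and the chosen norm $\|\cdot\|$ (and crucially not on $\mathbf{A}$), so is $C$, completing the proof.

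There is no real obstacle here; the only thing to be careful about is the invertibility of $\mathbf{Q}$, which must be explicitly noted before the substitution step. This is guaranteed by the full-rank assumption on $\mathbf{A}$ together with $\mathbf{A}=\mathbf{Q}\mathbf{R}$ (from Theorem \ref{thm:qrfact}) and the fact that all diagonal entries $\gamma_j$ of $\mathbf{R}$ are nonzero, the latter of which is already implicit in algorithm \ref{alg:genqr} since otherwise $A_j$ would lie in the span of the preceding columns, contradicting full rank.
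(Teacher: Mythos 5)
Your proposal is correct and follows essentially the same route as the paper, which simply combines Theorem \ref{thm:forward} and Theorem \ref{thm:inverse} and takes $C = C_1/C_2$; you have merely spelled out the translation to operator norms and the invertibility of $\mathbf{Q}$ that the paper leaves implicit.
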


\begin{proof}
  Applying theorems \ref{thm:forward} and \ref{thm:inverse} together we may take $C = \frac{C_1}{C_2}$ and see immediately that it establishes the desired bound.
\end{proof}

\begin{remark}
  Note that while $C_1$ \emph{likely} depends on the dimension $m$ of the space because of the use of norm equivalence, $C_2$ \emph{almost certainly} depends on $m$.
  The best bound achieved here has $C_2$ decaying exponentially with $m$, leading to an exponentially growing condition number of $Q$ if the bound is sharp. We will
  see however in numerical experiments presented in section \ref{sec:experiments} that this bound appears to be a much more manageable $O(\frac{1}{m})$
  for the $l^1$ and $l^{\infty}$ norms.

  The key of this theorem wasn't necessarily a \emph{provably small} bound, but rather that, regardless of the norm $\| \cdot \|$ 
  the bound is independent of $\mathbf{A},$ so in particular $\mathbf{A}$ may be nearly numerically singular and $\mathbf{Q}$ still
  mathemtatically has the same conditioning. It may be possible if we restrict ourselves to specific norms to prove much more lenient
  bounds. Of course we already know that for the $l^2$ norm we have $C_1 = C_2 = 1$ (see theorem \ref{thm:classicqr} below).
\end{remark}
Finally I show that when we take the input norm as the classic Euclidean norm then the factorization becomes a classic QR factorization.
\begin{theorem}[Classic QR as special case]\label{thm:classicqr}
  Suppose that $m$ is a positive integer, that $\mathbf{A} \in \mathbb{R}^{m \times m}$ is full-rank, that $\| \cdot \| = \| \cdot \| _2 $ is the classic Euclidean norm, 
  and that $\mathbf{Q},\mathbf{R}$ are output from algorithm \ref{alg:genqr}. Then
  \begin{displaymath}
    \mathbf{Q}^{-1} = \mathbf{Q}^T
  \end{displaymath}
\end{theorem}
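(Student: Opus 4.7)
The plan is to show that in the Euclidean case, algorithm \ref{alg:genqr} reduces to classical Gram--Schmidt orthogonalization, so that $\mathbf{Q}$ has orthonormal columns and therefore $\mathbf{Q}^{-1} = \mathbf{Q}^T$. The argument proceeds by induction on $j$, with the hypothesis that $Q_1,\ldots,Q_j$ form an orthonormal set (in the Euclidean sense).

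The base case is immediate: $Q_1 = A_1/\|A_1\|_2$ has unit Euclidean norm. For the inductive step, I would invoke the defining property of Euclidean least-squares: when $(Q^j)^T Q^j = I_j$ (which holds by the inductive hypothesis), the minimizer $c^j$ of $\|A_{j+1} - Q^j c\|_2$ is characterized by the normal equations $(Q^j)^T(A_{j+1} - Q^j c^j) = 0$. That is, the residual $A_{j+1} - Q^j c^j$ is Euclidean-orthogonal to each of $Q_1,\ldots,Q_j$. Normalizing this residual by $\gamma_j = \|A_{j+1} - Q^j c^j\|_2$ produces $Q_{j+1}$ with unit Euclidean norm and with $Q_{j+1}^T Q_i = 0$ for $i=1,\ldots,j$. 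So $Q_1,\ldots,Q_{j+1}$ remain orthonormal and the induction closes.

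Having established that every column of $\mathbf{Q}$ is a unit vector orthogonal to all preceding columns, I would conclude $\mathbf{Q}^T \mathbf{Q} = I_m$. Since $\mathbf{Q}$ is square, this is equivalent to $\mathbf{Q}^{-1} = \mathbf{Q}^T$, which is the desired conclusion. One minor point worth noting: because $\mathbf{A}$ is full-rank, each $\gamma_j$ is strictly positive (otherwise $A_{j+1}$ would lie in $\operatorname{span}(Q^j) = \operatorname{span}(A_1,\ldots,A_j)$), so the normalization in \ref{eq:recurrence3} is well-defined.

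There is no real obstacle here; the only thing to verify carefully is the identification of the least-squares minimizer with the orthogonal projection, which is standard and follows directly from expanding $\|A_{j+1} - Q^j c\|_2^2$ and differentiating, or equivalently from the Pythagorean theorem applied to the decomposition $A_{j+1} = Q^j c^j + (A_{j+1} - Q^j c^j)$. Thus the proof is essentially a short verification that algorithm \ref{alg:genqr} specializes to Gram--Schmidt when $\|\cdot\| = \|\cdot\|_2$.
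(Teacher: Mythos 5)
Your proposal is correct and follows essentially the same route as the paper: identify the Euclidean least-squares minimizer with the orthogonal projection onto $\operatorname{span}(Q_1,\ldots,Q_j)$, observe that the normalized residual is therefore orthonormal to the preceding columns, and conclude $\mathbf{Q}^T\mathbf{Q}=\mathbf{I}$. Your version is slightly more careful (explicit induction, normal equations, and the remark that full rank of $\mathbf{A}$ keeps $\gamma_j>0$), but the underlying argument is the same.
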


\begin{proof}
  By the inductive definition of $Q$ in \ref{eq:recurrence3} we have
  \begin{equation}
    Q^{j+1} = (Q^j, \gamma_j ^{-1} ( A_j - Q^j c^j) )
  \end{equation}
  Recall that $c^j$ solves the minimization problem
  \begin{equation}
    c^j =\argmin _ {c^j \in \mathbf{R}^j } \norm{  A_j - Q^j c^j }_2  
  \end{equation}
  which means it is forming the $l^2$ projection of $A_k$ onto the space $V=\linearspace(Q_1,\ldots,Q_j)$.
  Since $Q_{j+1}$ is the residual of this projection, it is orthogonal to the whole space $V$. 

  In other words the above shows that the columns of $\mathbf{Q}$ are mutually orthogonal, and the columns are also obviously normalized, 
  so in fact the columns are mutually orthonormal - thus we have
  \begin{displaymath}
    \mathbf{Q}^T\mathbf{Q} = \mathbf{I}
  \end{displaymath}
  as desired
\end{proof}

The theorems above establish that the algorithm produces a factorization of $\mathbf{A}$ and that the resulting matrix $\mathbf{Q}$ has
good conditioning properties. Everything so far assumed that $\mathbf{A}$ was square and full-rank but I demonstrate below that 
these restrictions may easily be removed without changing the theorems.

\subsection{Extending to rank-deficient case and rectangular \texorpdfstring{$\mathbf{A}$}{A}}
\label{sec:lowrank}
One can use the algorithm \ref{alg:genqr} without significant modification on rank-deficient matrices and rectangular matrices.
For this we need a "breakdown condition" on the normalization value $\gamma_j$ in equation \ref{eq:recurrence3}. When
$\gamma_j \approx 0$ that means the minimization problem has found a nearly exact answer meaning the input matrix $\mathbf{A}$
is rank deficient. To handle this case the algorithm fills the corresponding values of $\mathbf{R}$ (resulting in a $0$ on the
diagonal) but does not include the new column of $\mathbf{Q}$ corresponding to the breakdown 
and then proceeds to the next column of $\mathbf{A}$ until all columns have been processed. However many columns of $\mathbf{A}$ get
"skipped" in this fashion reduces the number of columns of $\mathbf{Q}$ and rows of $\mathbf{R}$. In other words
if the input matrix $\mathbf{A}\in\mathbb{R}^{m \times m}$ has rank $k\leq m$ then the above modifications output
$\mathbf{Q}\in \mathbb{R}^{m \times k }, \mathbf{R} \in \mathbb{R}^{k \times m }$ (similar to a "thin QR"). A factorization
in this way may readily be shown to also satisfy all of the theorems that assumes full-rank $\mathbf{A}$.

For rectangular matrices we may use the above observation and simply input the rectangular matrix into a square matrix that is zero-padded. 
The resulting matrix will be rank deficient and the earlier modifications to the algorithm will correctly produce a factorization. 
In practice one should simply use the rectangular matrices directly - but I make this observation for the purpose of extending the 
theorems proven for the square matrix case.

\subsection{Rank-revealing factorizations} 
\label{sec:rankreveal}
Following observations in \ref{sec:lowrank} we could further extend this algorithm into a "rank revealing" algorithm which
also outputs a column permutation for $\mathbf{A}$ which guarantees that the diagonal of 
$\mathbf{R}$ is \emph{decreasing}. I have done this in a pre-print \cite{atcheson2019rank} and there have proven that the resulting factorization
has the expected rank-revealing properties and can even be used as a way to compute low-rank approximations to an input matrix similar to 
classical rank-revealing QR. I found the resulting conditioning theorems, specifically \ref{thm:inverse}, very difficult to prove however and in this manuscript
sought to remove any extraneous details not relevant to this bound.

\section{Numerical Experiments}
\label{sec:experiments}

Below I provide numerical experiments to confirm the theorems conerning $\mathbf{Q}$ and to provide some intuition I also suggest a way to interpret
$\mathbf{Q}$ as a basis - similarly to how it is interpreted for classic QR. I do these studies for both the $l^1$ and $l^{\infty}$ norms.
I describe in the appendix section \ref{app:implement} how the factorization was implemented and provide example code for this purpose.

\subsection{Numerical studies confirming bounds on \texorpdfstring{$\mathbf{Q}$}{Q}}
\label{sec:numericbound}

For these studies I seek to confirm that the forward bound \ref{thm:forward} and the inverse bound \ref{thm:inverse} are indeed
independent of any input $\mathbf{A}$. I then attempt to quantify the dependence of these bounds on $m$ as proof of theorem \ref{thm:inverse}
resulted in exponentially decaying bound as $m\to \infty$, resulting in exponentially growing inverse matrix norm. Since this theorem
was proved using an \emph{arbitrary} norm it stands to reason that \emph{specific concrete} norms could improve on this growth
significantly. To show these I randomly sample matrices with different condition numbers and sizes $m$, apply
algorithm \ref{alg:genqr}, and then compute the forward and inverse bounds as matrix norms. I do this first for the $l^1$ 
case and then follow with the $l^{\infty}$ case

\begin{figure}[H]
    \centering
    \begin{subfigure}[b]{0.4\textwidth}
        \includegraphics[width=\textwidth]{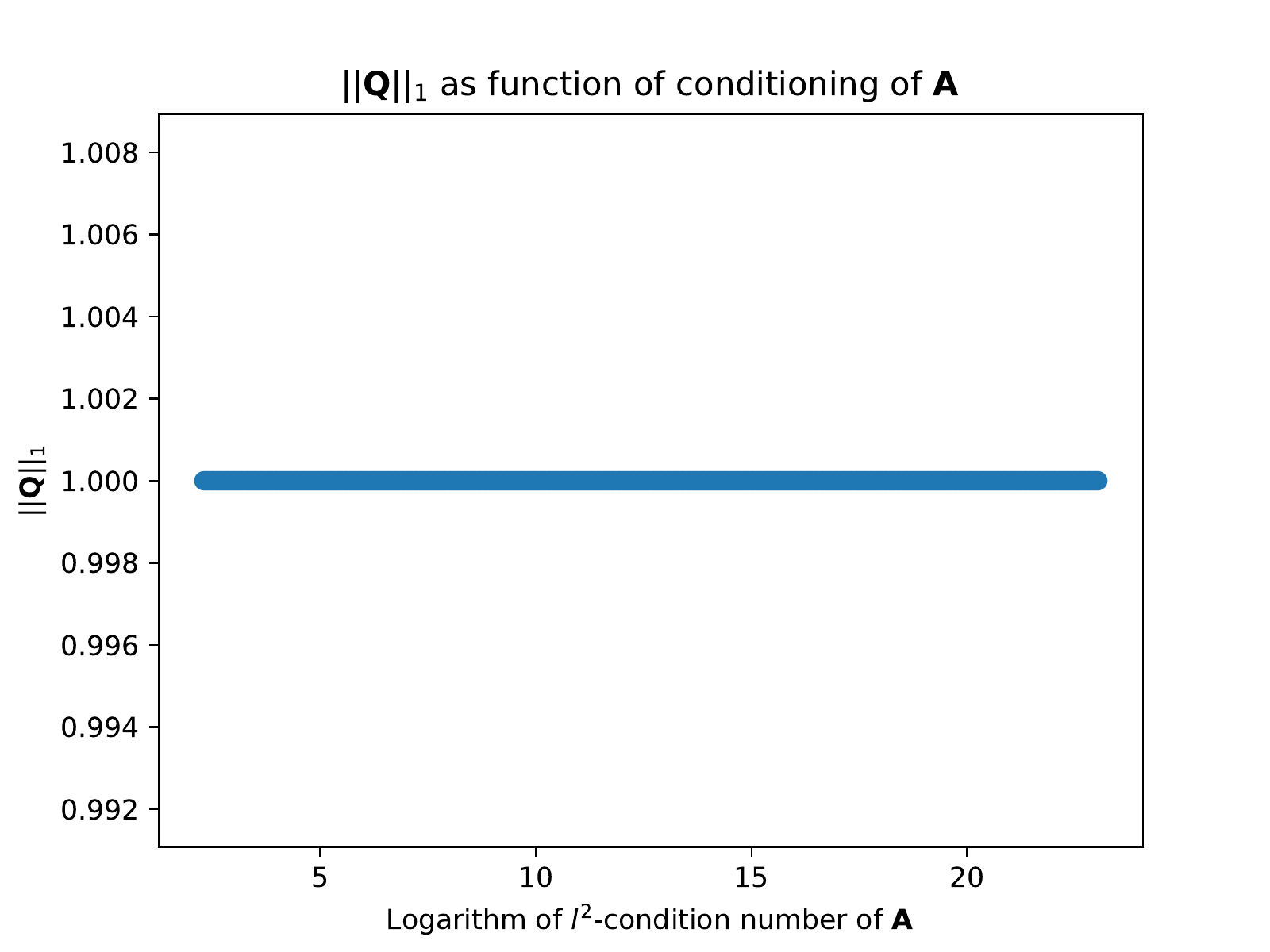}
    \end{subfigure}
    ~ 
    \begin{subfigure}[b]{0.4\textwidth}
        \includegraphics[width=\textwidth]{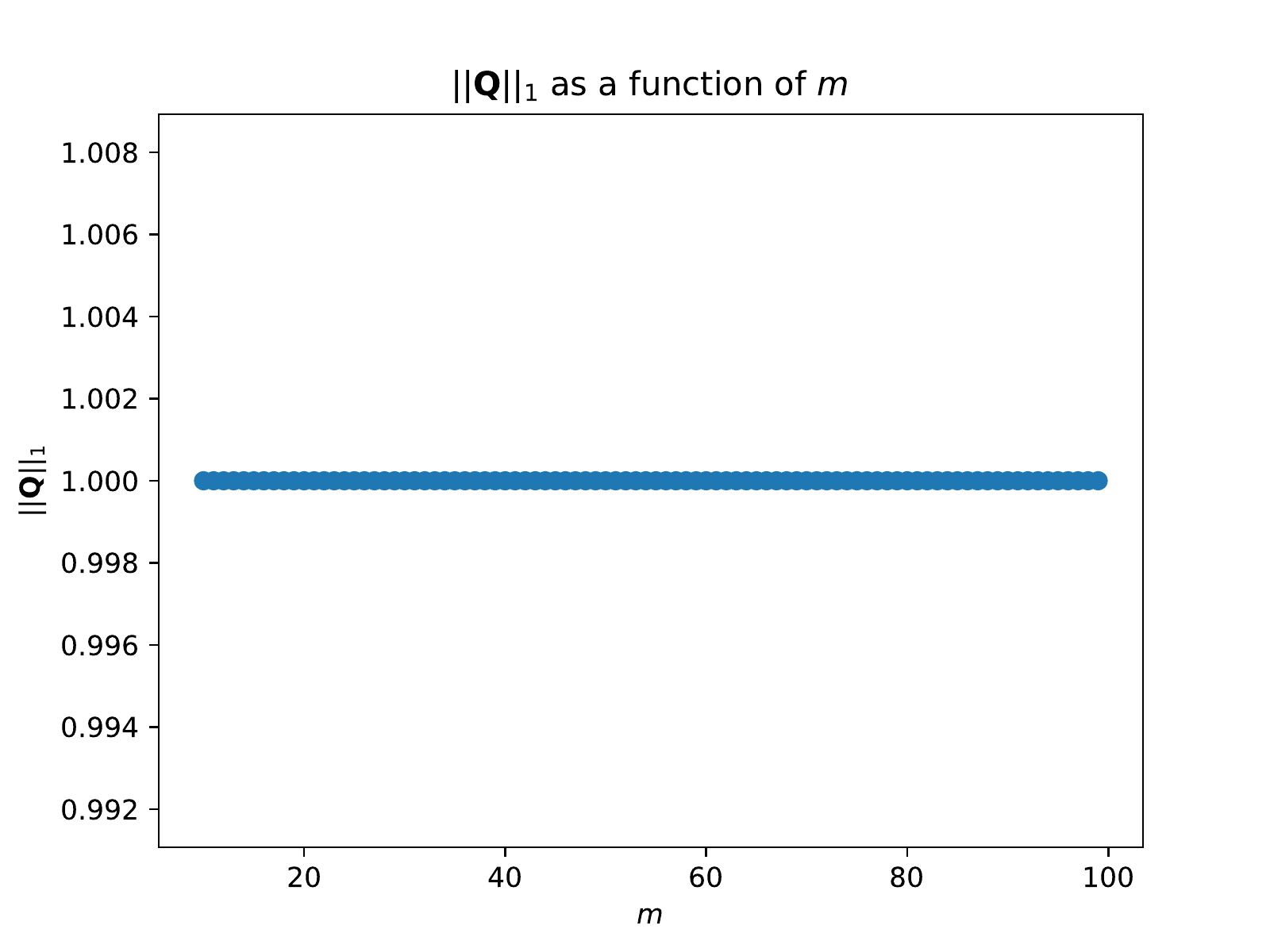}
    \end{subfigure}

    \begin{subfigure}[b]{0.4\textwidth}
        \includegraphics[width=\textwidth]{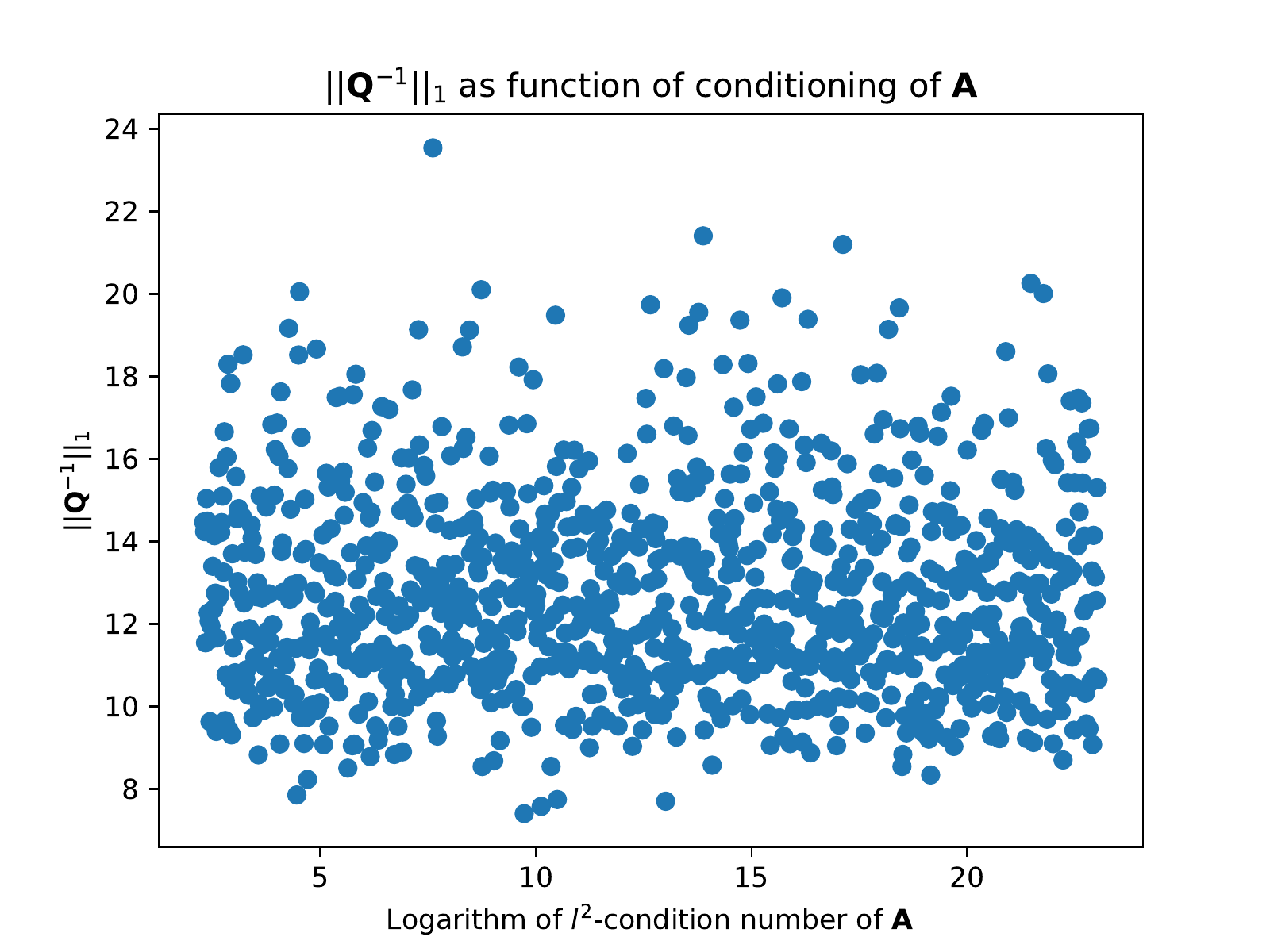}
    \end{subfigure}
    ~ 
    \begin{subfigure}[b]{0.4\textwidth}
        \includegraphics[width=\textwidth]{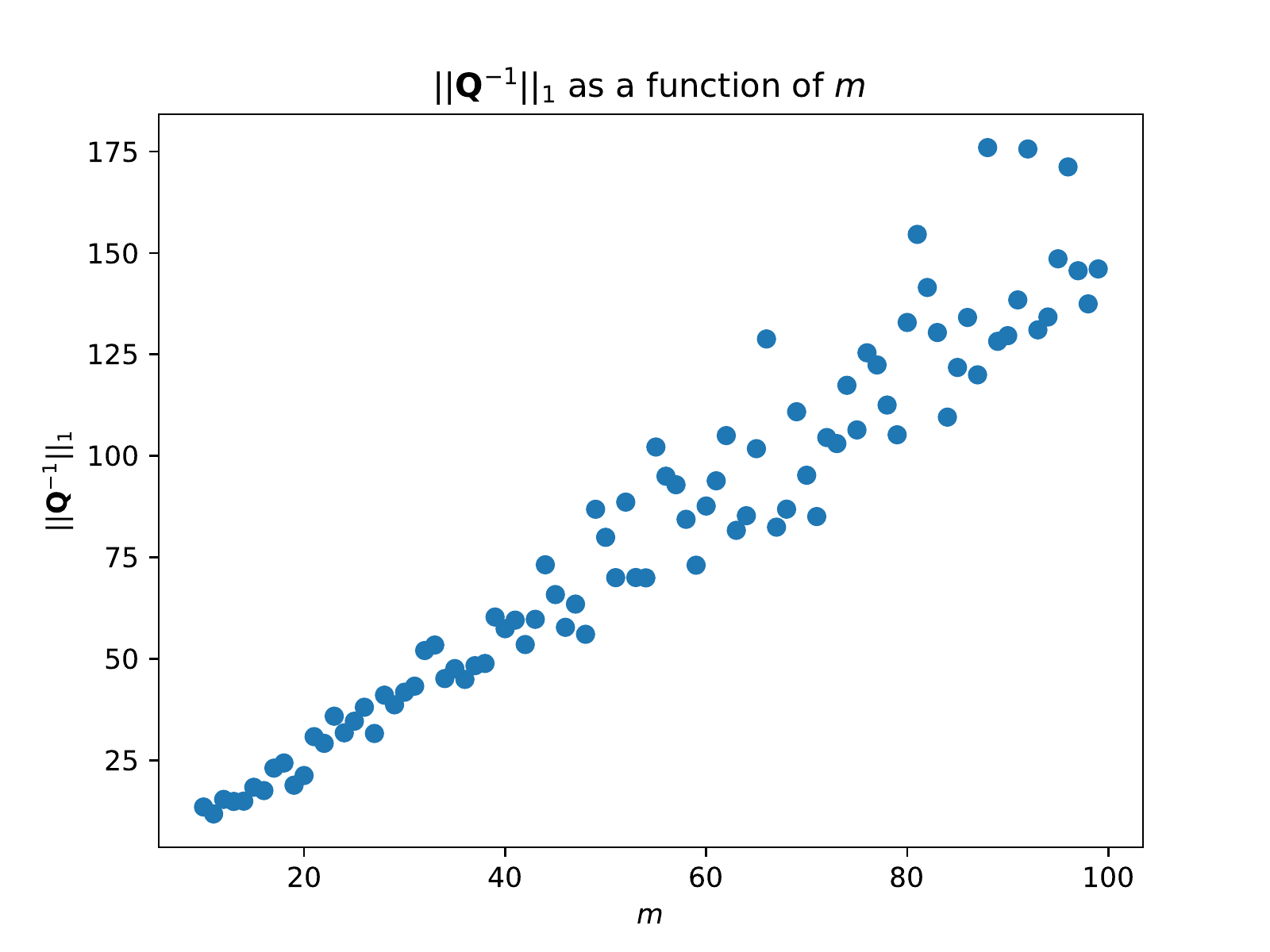}
    \end{subfigure}

    \caption{Using $l^1$ norm: Dependence of forward and inverse bounds of $\mathbf{Q}$ on $m$ and $\kappa \left ( \mathbf{A} \right )$}
\end{figure}
and also the $l^{\infty}$ case below
\begin{figure}[H]
    \centering
    \begin{subfigure}[b]{0.4\textwidth}
        \includegraphics[width=\textwidth]{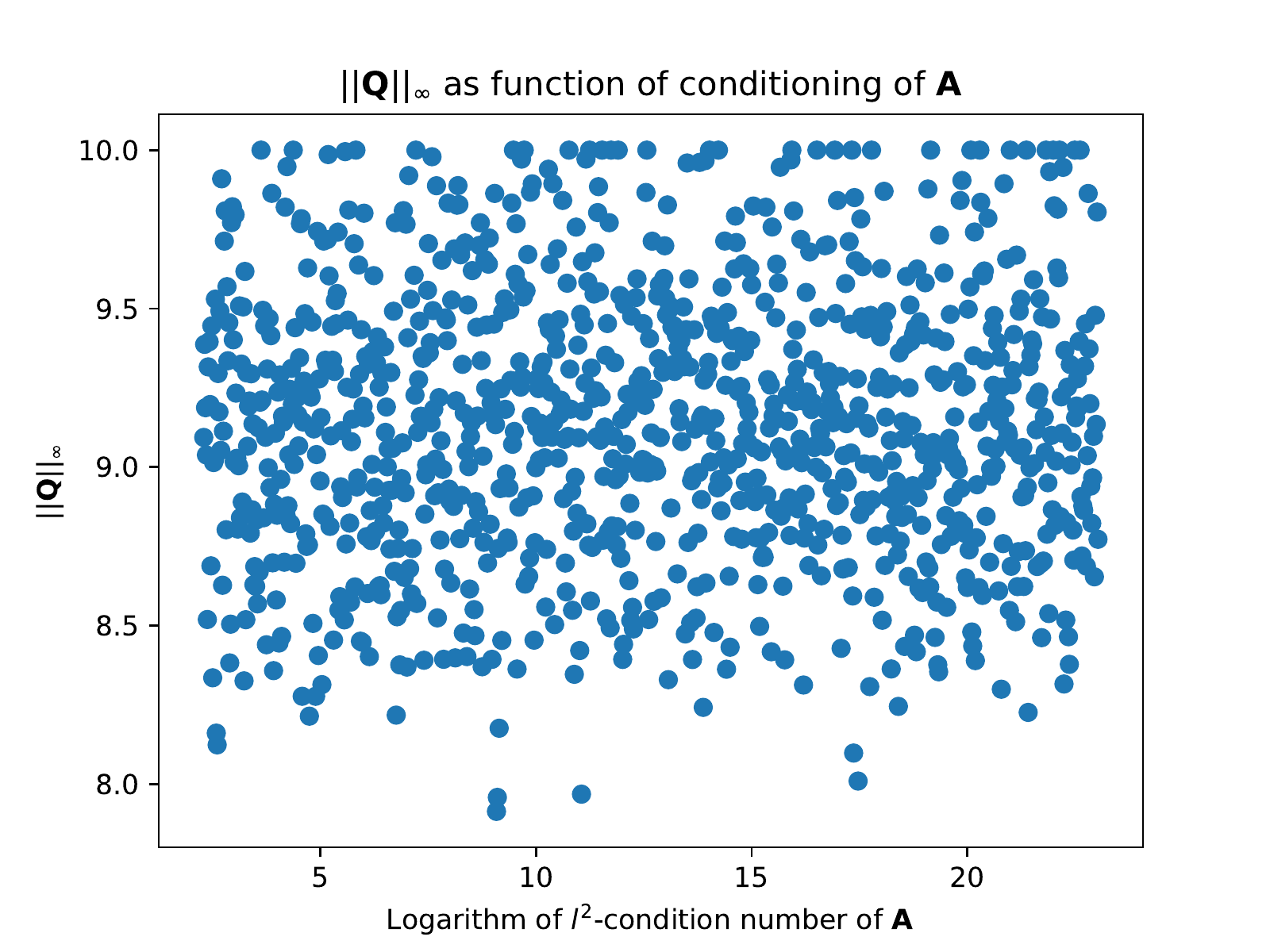}
    \end{subfigure}
    ~ 
    \begin{subfigure}[b]{0.4\textwidth}
        \includegraphics[width=\textwidth]{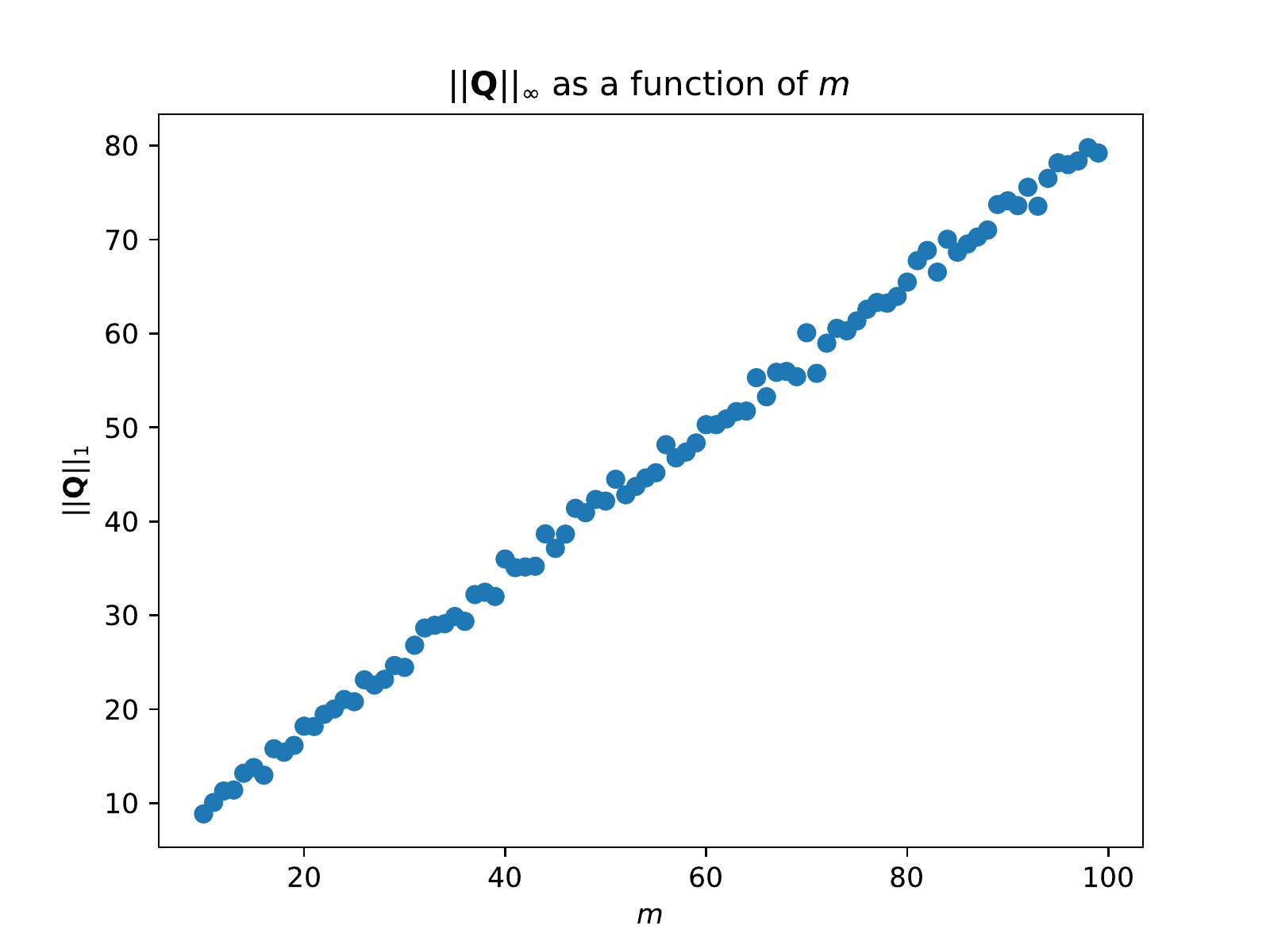}
    \end{subfigure}

    \begin{subfigure}[b]{0.4\textwidth}
        \includegraphics[width=\textwidth]{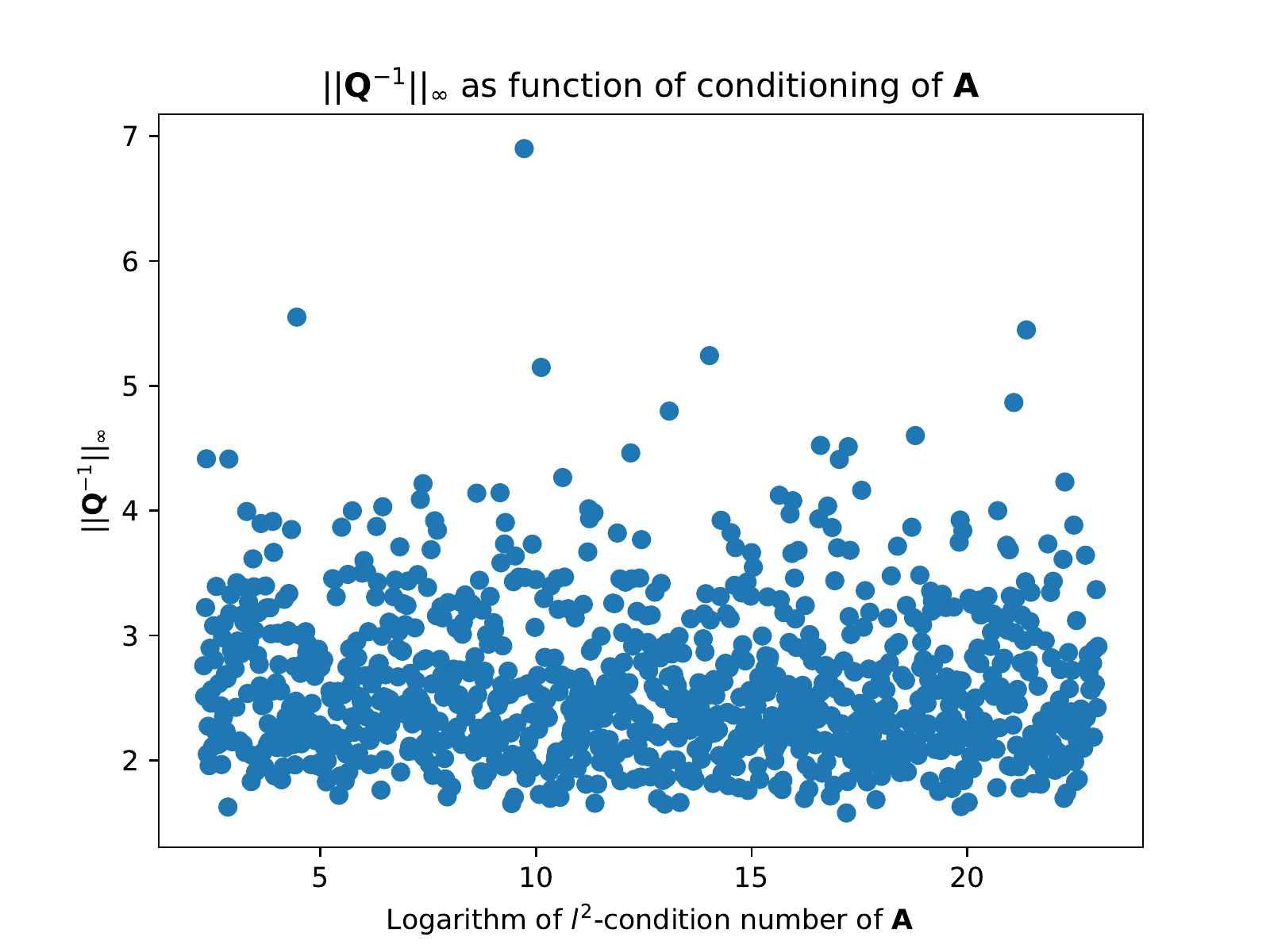}
    \end{subfigure}
    ~ 
    \begin{subfigure}[b]{0.4\textwidth}
        \includegraphics[width=\textwidth]{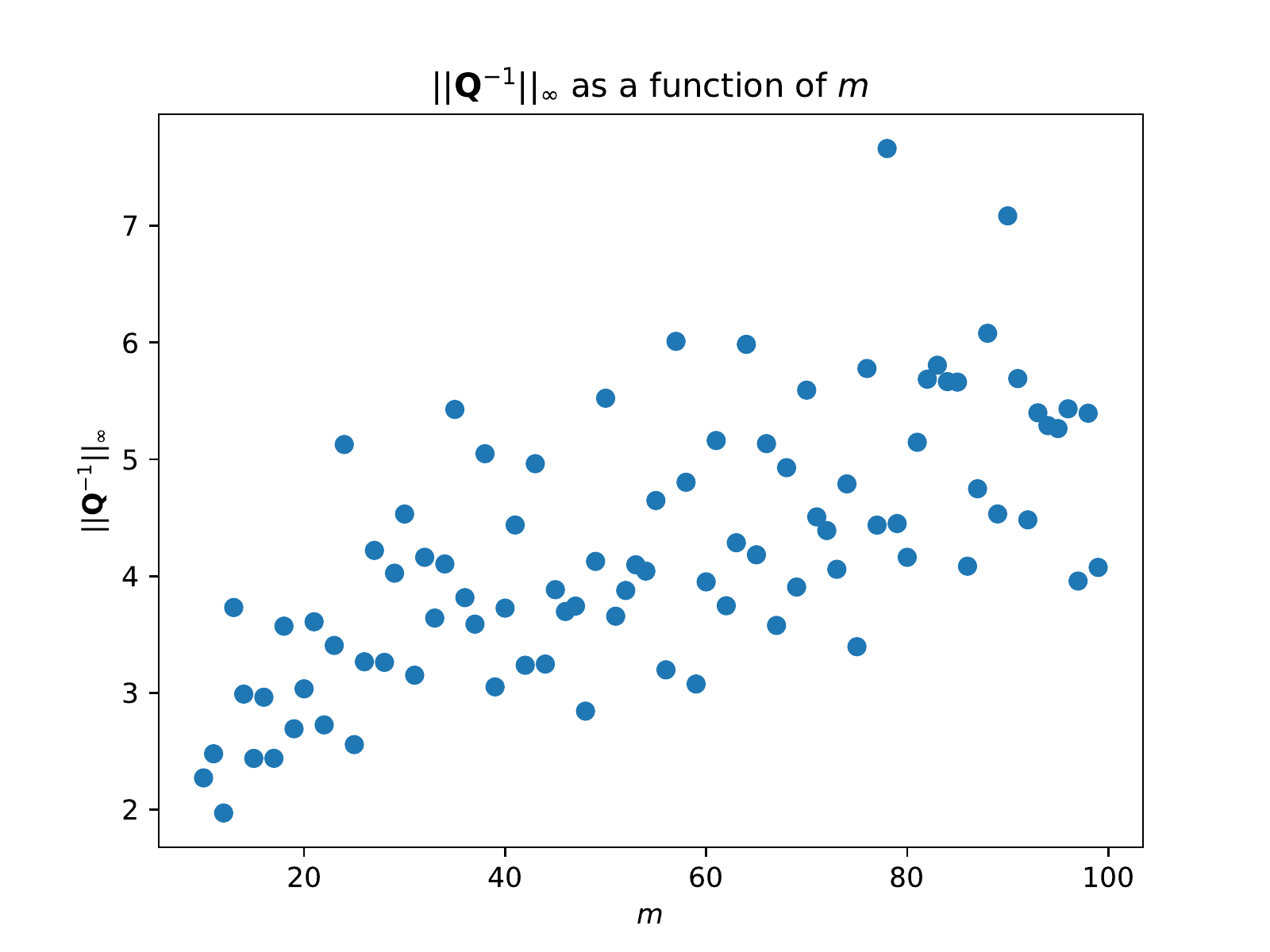}
    \end{subfigure}

    \caption{Using $l^{\infty}$ norm: Dependence of forward and inverse bounds of $\mathbf{Q}$ on $m$ and $\kappa \left ( \mathbf{A} \right )$}
\end{figure}

What we see here is confirmation that the bounds are independent of $\mathbf{A}$ and that the bounds do not grow/decay exponentially
in $m$ though there is what appears to be linear growth.

\subsection{Interpreting columns of \texorpdfstring{$\mathbf{Q}$}{Q} as a basis}
\label{sec:numericbasis}

To help provide extra intuition for the matrix $\mathbf{Q}$ I show here we may interpret it in much the same way we interpret 
this matrix when it arises from a classic QR factorization - as an optimized basis.

To illustrate this I take the Vandermonde matrix $\mathbf{V}$ defined as follows:

\begin{align*}
  m &= 400 \\
  n &= 5 \\
  h &= \frac{2}{m-1} \\
  x_i &= -1 + h*(i-1) \text{ } (i=1,\ldots,m) \\
  V_{i,j} &= x_i^{j-1} \text{ } (i=1,\ldots,m),(j=1,\ldots,n)
\end{align*}
or, in other words, the $j-th$ column of the Vandermonde matrix is the $j$-th monomial applied to a sampling of its domain, in this
case $400$ equally spaced points from the interval $[-1,1]$.
I plot the first few polynomials below for the unaltered Vandermonde, the $l^1$ $\mathbf{Q}$, and the $l^{\infty}$ $\mathbf{Q}$.

\begin{figure}[H]
    \centering
    \begin{subfigure}[b]{0.4\textwidth}
        \includegraphics[width=\textwidth]{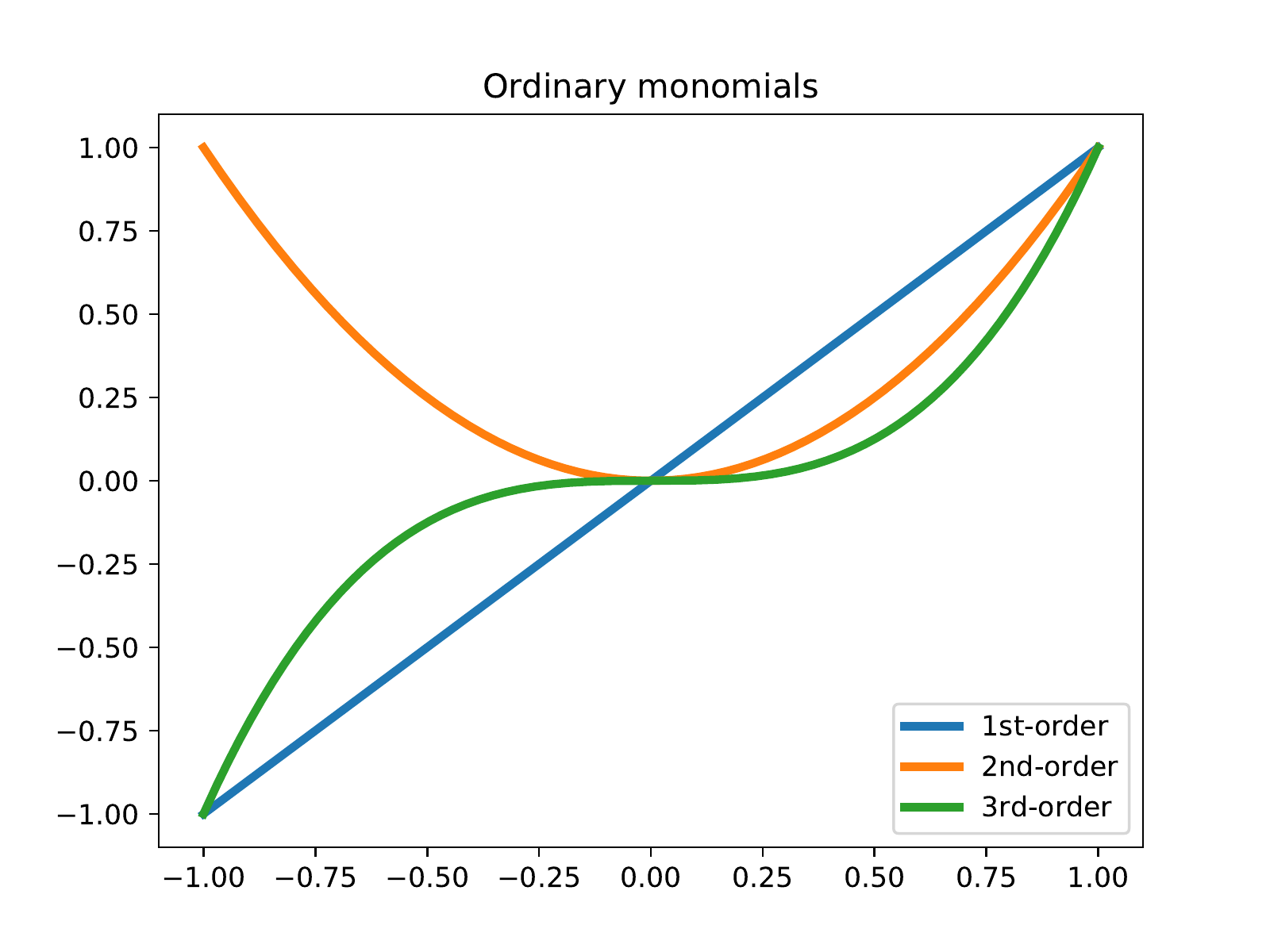}
    \end{subfigure}
    ~ 
    \begin{subfigure}[b]{0.4\textwidth}
        \includegraphics[width=\textwidth]{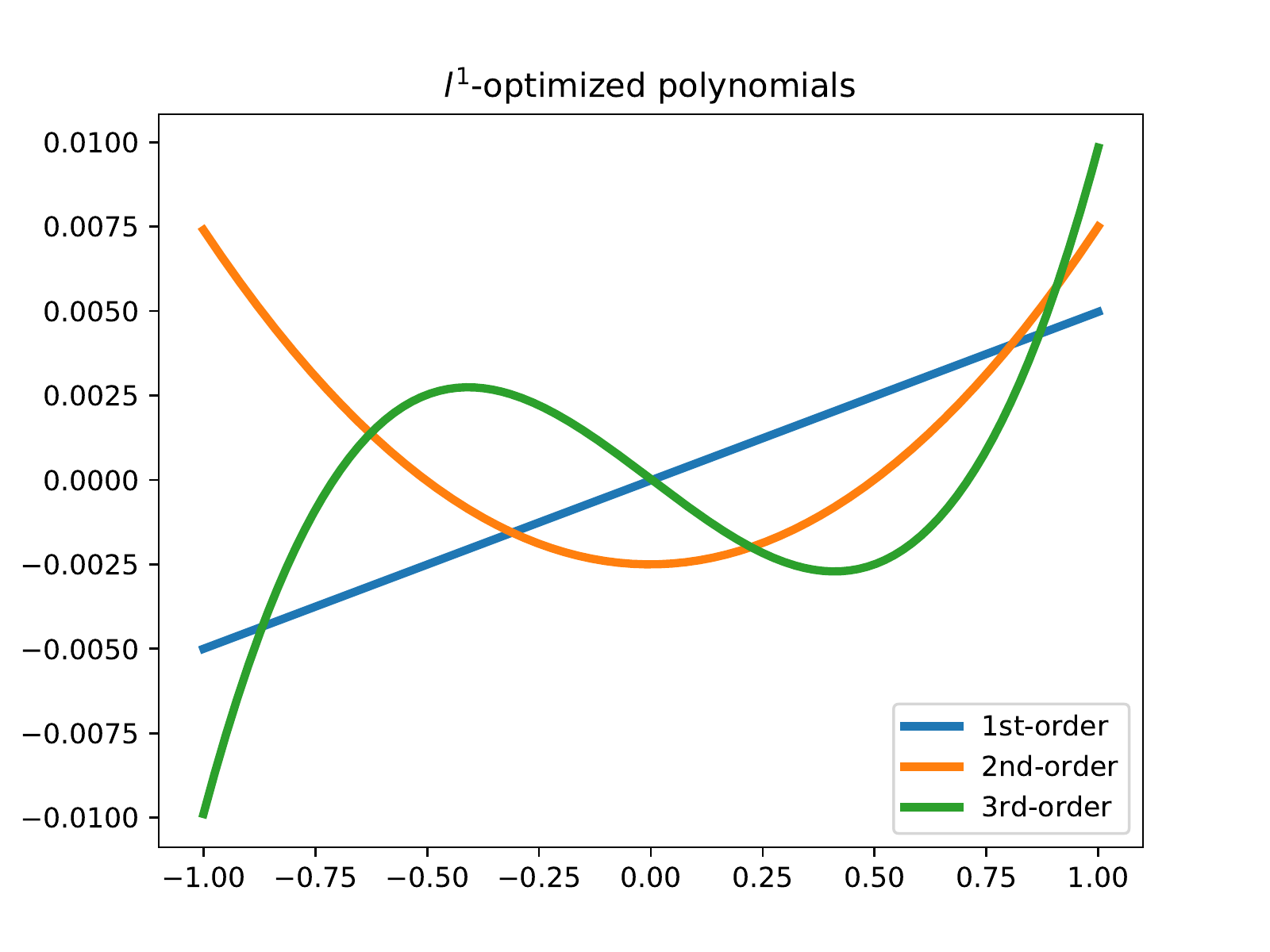}
    \end{subfigure}

    \begin{subfigure}[b]{0.4\textwidth}
      \includegraphics[width=\textwidth]{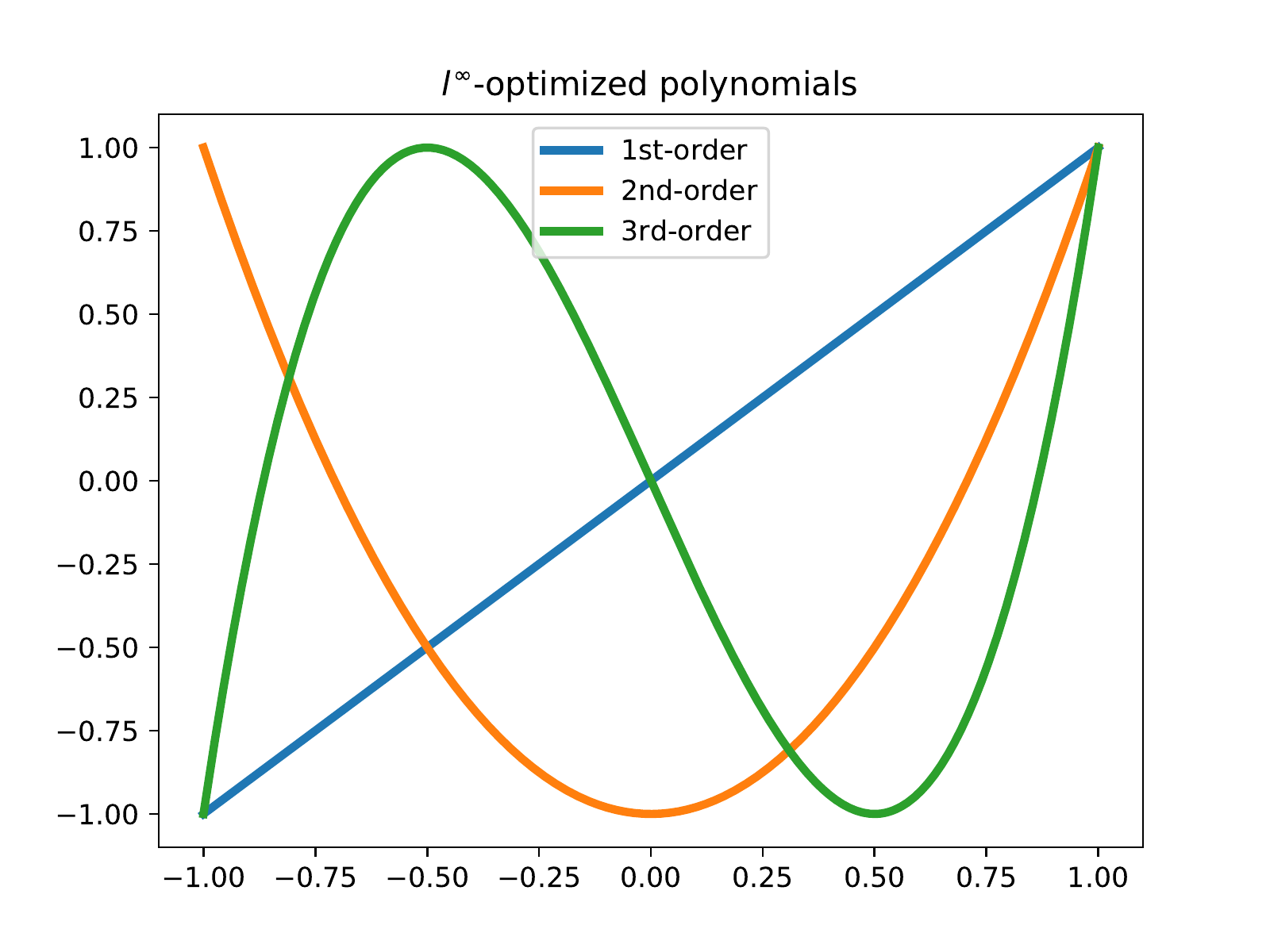}
    \end{subfigure}

    \caption{Applying algorithm \ref{alg:genqr} to Vandermonde matrix to generate different polynomial basis}
\end{figure}
Note that the $l^{\infty}$ plot effectively is an approximation to Chebyshev polynomials which would get better for larger $m$.

\section{Conclusions}
\label{sec:conclusions}

I demonstrated that algorithm \ref{alg:genqr} produces a factorization of an input matrix such that
its factors satisfy analogous properties to classical
QR factorization, but instead depend on potentially non-Euclidean norms (which are user-specified).
I showed specifically in theorem \ref{thm:forward} and \ref{thm:inverse} that the $\mathbf{Q}$ factor
is "well behaved" with respect to the input matrix $\mathbf{A}$. I illustrated the mathematical
facts with numerical experiments that validated the principle and showed we can likely improve
the constants - at least in the case of the $l^1,l^{\infty}$ norms.

Given the ongoing research of exploiting novel properties of non-Euclidean norms for matrix factorizations
I hope to use this as a step towards QR-like factorizations that can build in these properties from other 
norms.

\appendix
\section{Python implementation} 
\label{app:implement}
The key to implementing \ref{alg:genqr} is the minimization problem \ref{eq:recurrence1}. Fortunately for the
$l^1$ and $l^{\infty}$ norms we can easily formulate the minimization problems as linear programs and solve it with the
simplex algorithm - see e.g. \cite{siaml1} for the $l^1$ case and \cite{barrodale1974improved} for the $l^{\infty}$
case.

I implement these minimum-norm solvers in two different ways - one way is highly optimized and enables factorizing much larger
matrices but depends on the closed-source NAG library \cite{nagc} by way of the Python interface \cite{nagpython}. For the 
$l^1$ norm I used the NAG routine "e02gac" and for the $l^{\infty}$ norm I used the NAG routine "e02gcc". This way of computing
the factorizations is preferred because it is much more efficient. Since some may not have access to the NAG library however
I also provide a way to solve the minimization problems directly with linear programs using NumPy \cite{harris2020array} and
SciPy \cite{2020SciPy-NMeth}.

The plain SciPy implementation of the $l^1$ solver is in the soruce listing \ref{impl:l1}, the plain SciPy implementation of the $l^{\infty}$ solver
is in source listing \ref{impl:linf}. Finally the actual QR algorithm is in \ref{impl:qr}. Note that the norm \emph{and} the solver are input callbacks
so that one may use faster solvers as I have done for the NAG library variants.

\begin{figure}[H]
  \caption{Solving $l^1$ minimization problem with linear program in SciPy}
  \label{impl:l1}
\begin{python}
import numpy as np
import scipy.optimize as opt
def lst1norm(A,b):
    (m,n)=A.shape
    nvars=m+n
    ncons=2*m
    cons=np.zeros((ncons,nvars))
    cons[0:m,0:m]=-np.identity(m)
    cons[0:m,m:m+n]=A
    cons[m:2*m,0:m]=-np.identity(m)
    cons[m:2*m,m:m+n]=-A
    c=np.zeros(nvars)
    c[0:m]=1.0
    ub=np.zeros(ncons)
    ub[0:m]=b
    ub[m:2*m]=-b
    bounds=[]
    for i in range(0,m):
        bounds.append((0,None))
    for i in range(m,m+n):
        bounds.append((None,None))
    out=opt.linprog(c,cons,ub,None,None,bounds,options={'tol':1e-10,'lstsq' : True})
    return (out.x[m:m+n],out.fun)
\end{python}

\end{figure}

\begin{figure}[H]
  \caption{Solving $l^{\infty}$ minimization problem with linear program in SciPy}
  \label{impl:linf}
\begin{python}
  import numpy as np
  import scipy.optimize as opt
def lstinfnorm(A,b):
    (m,n)=A.shape
    #First n variables are x, last variable is "c" representing the inf-norm
    nvars=n+1
    ncons=2*m
    cons=np.zeros((ncons,nvars))
    ub=np.zeros(ncons)
    #First linear constraint: Ax-b<=c -->  Ax-c<=b
    cons[0:m,0:n]=A
    cons[0:m,n]=-1.0
    ub[0:m]=b
    #Second linear constraint: b-Ax<=c -->-Ax-c<=-b
    cons[m:2*m,0:n]=-A
    cons[m:2*m,n]=-1.0
    ub[m:2*m]=-b
    #Objective function: minimize c
    coeffs=np.zeros(nvars)
    coeffs[n]=1.0

    #No bounds for "x"
    bounds=[]
    for i in range(0,n):
        bounds.append((None,None))
    #But "c" should be nonnegative
    bounds.append((0,None))

    out=opt.linprog(coeffs,cons,ub,None,None,bounds,options={'tol':1e-10,'lstsq' : True})
    return (out.x[0:n],out.fun)

\end{python}

\end{figure}

\begin{figure}[H]
  \caption{Python QR algorithm}
  \label{impl:qr}
\begin{python}
import numpy as np

USE_NAG=False
#If NAG library is available use it, otherwise fall back to SciPy+linprog
#solvers
try:
    USE_NAG=True
    from naginterfaces.library.fit import glin_l1sol
    from naginterfaces.library.fit import glin_linf
except:
    from plain_scipy_solvers import lst1norm,lstinfnorm

#Simple wrapper that chooses NAG if available, otherwise uses SciPy
def l1solve(A,b):
    m,n=A.shape
    if USE_NAG:
        B=np.zeros((m+2,n+2))
        B[0:m,0:n]=A
        _,_,x,_,_,_=glin_l1sol(B,b)
        return x[0:n]
    else:
        return lst1norm(A,b)[0]

#Simple wrapper that chooses NAG if available, otherwise uses SciPy
def linfsolve(A,b):
    m,n=A.shape
    if USE_NAG:
        B=np.zeros((n+3,m+1))
        B[0:n,0:m]=A.T
        relerr=0.0
        _,_,_,x,_,_,_ = glin_linf(n,B,b,relerr)
        return x
    else:
        return lstinfnorm(A,b)[0]

#Note the callback inputs. These must be consistent. e.g.
#if the input norm is the l1-norm, then the solver must solve the
#l1-norm-minimization problem
def qr(A,norm=lambda x : np.linalg.norm(x,ord=1),solver=l1solve):
    m,n=A.shape
    Q=np.zeros((m,n))
    R=np.zeros((n,n))
    #First column of Q is normalized first column of A
    gamma=norm(A[:,0])
    Q[:,0]=A[:,0]/gamma
    #First entry of R is the normalization factor
    R[0,0]=gamma
    for i in range(1,n):
        #Find the best combination of existing Q vectors to match next column of A
        c=solver(Q[:,0:i],A[:,i])
        #calculate residual
        r=A[:,i]-Q[:,0:i]@c
        #Get normalization factor
        gamma=norm(r)
        #New Q column is normalized residual
        Q[:,i]=r/gamma
        #Upper triangular part of r are the coefficients c
        R[0:i,i]=c
        #Diagonal part is the normalization factor
        R[i,i]=gamma
    return Q,R
\end{python}

\end{figure}

\bibliographystyle{siamplain}
\bibliography{references}

\begin{thebibliography}{10}

\bibitem{lapack}
{\sc E.~Anderson, Z.~Bai, C.~Bischof, L.~S. Blackford, J.~Demmel, J.~Dongarra,
  J.~Du~Croz, A.~Greenbaum, S.~Hammarling, A.~McKenney, and D.~Sorensen}, {\em
  LAPACK Users' Guide}, Society for Industrial and Applied Mathematics,
  third~ed., 1999, \url{https://doi.org/10.1137/1.9780898719604},
  \url{https://epubs.siam.org/doi/abs/10.1137/1.9780898719604},
  \url{https://arxiv.org/abs/https://epubs.siam.org/doi/pdf/10.1137/1.9780898719604}.

\bibitem{atcheson2019rank}
{\sc R.~Atcheson}, {\em A rank revealing factorization using arbitrary norms},
  2019, \url{https://arxiv.org/abs/1905.02355}.

\bibitem{templateseig}
{\sc Z.~Bai, J.~Demmel, J.~Dongarra, A.~Ruhe, and H.~van~der Vorst}, {\em
  Templates for the Solution of Algebraic Eigenvalue Problems}, Society for
  Industrial and Applied Mathematics, 2000,
  \url{https://doi.org/10.1137/1.9780898719581},
  \url{https://epubs.siam.org/doi/abs/10.1137/1.9780898719581},
  \url{https://arxiv.org/abs/https://epubs.siam.org/doi/pdf/10.1137/1.9780898719581}.

\bibitem{barrodale1974improved}
{\sc I.~Barrodale and C.~Phillips}, {\em An improved algorithm for discrete
  chebyshev linear approximation}, in Proc. 4th Manitoba Conf. on Numer. Math.,
  U. of Manitoba, Winnipeg, Canada, 1974, pp.~177--190.

\bibitem{minimax}
{\sc I.~Barrodale and C.~Phillips}, {\em Algorithm 495: Solution of an
  overdetermined system of linear equations in the chebychev norm [f4]}, ACM
  Trans. Math. Softw., 1 (1975), p.~264–270,
  \url{https://doi.org/10.1145/355644.355651},
  \url{https://doi.org/10.1145/355644.355651}.

\bibitem{birkes2011alternative}
{\sc D.~Birkes and Y.~Dodge}, {\em Alternative methods of regression},
  vol.~190, John Wiley \& Sons, 2011.

\bibitem{Donoho2006ForML}
{\sc D.~Donoho}, {\em For most large underdetermined systems of linear
  equations the minimal 1-norm solution is also the sparsest solution},
  Communications on Pure and Applied Mathematics, 59 (2006), pp.~797--829.

\bibitem{golub2013matrix}
{\sc G.~H. Golub and C.~F. Van~Loan}, {\em Matrix computations}, vol.~3, JHU
  press, 2013.

\bibitem{harris2020array}
{\sc C.~R. Harris, K.~J. Millman, S.~J. van~der Walt, R.~Gommers, P.~Virtanen,
  D.~Cournapeau, E.~Wieser, J.~Taylor, S.~Berg, N.~J. Smith, R.~Kern, M.~Picus,
  S.~Hoyer, M.~H. van Kerkwijk, M.~Brett, A.~Haldane, J.~F. del R{'{\i}}o,
  M.~Wiebe, P.~Peterson, P.~G{'{e}}rard-Marchant, K.~Sheppard, T.~Reddy,
  W.~Weckesser, H.~Abbasi, C.~Gohlke, and T.~E. Oliphant}, {\em Array
  programming with {NumPy}}, Nature, 585 (2020), pp.~357--362,
  \url{https://doi.org/10.1038/s41586-020-2649-2},
  \url{https://doi.org/10.1038/s41586-020-2649-2}.

\bibitem{high:ASNA2}
{\sc N.~J. Higham}, {\em Accuracy and Stability of Numerical Algorithms},
  Society for Industrial and Applied Mathematics, Philadelphia, PA, USA,
  second~ed., 2002.

\bibitem{ke2005robust}
{\sc Q.~Ke}, {\em Robust l1 norm factorization in the presence of outliers and
  missing data by alternative convex programming}, in Proceedings IEEE
  Conference on Computer Vision and Pattern Recognition (CVPR), January 2005,
  pp.~739--746,
  \url{https://www.microsoft.com/en-us/research/publication/robust-l1-norm-factorization-in-the-presence-of-outliers-and-missing-data-by-alternative-convex-programming/}.

\bibitem{siaml1}
{\sc J.~W. Liu}, {\em The role of elimination trees in sparse factorization},
  SIAM Journal on Matrix Analysis and Applications, 11 (1990), pp.~134--172,
  \url{https://doi.org/10.1137/0611010}, \url{https://doi.org/10.1137/0611010},
  \url{https://arxiv.org/abs/https://doi.org/10.1137/0611010}.

\bibitem{nagc}
{\sc {NAG Inc. and NAG Ltd.}}, {\em Nag c library}, 2021,
  \url{https://www.nag.com/content/nag-library} (accessed 2021/01/24).
\newblock Mark 27.1.

\bibitem{nagpython}
{\sc {NAG Inc. and NAG Ltd.}}, {\em Nag library for python}, 2021,
  \url{https://www.nag.com/content/nag-library-python} (accessed 2021/01/24).
\newblock Mark 27.1.

\bibitem{2020SciPy-NMeth}
{\sc P.~Virtanen, R.~Gommers, T.~E. Oliphant, M.~Haberland, T.~Reddy,
  D.~Cournapeau, E.~Burovski, P.~Peterson, W.~Weckesser, J.~Bright, S.~J. {van
  der Walt}, M.~Brett, J.~Wilson, K.~J. Millman, N.~Mayorov, A.~R.~J. Nelson,
  E.~Jones, R.~Kern, E.~Larson, C.~J. Carey, {\.I}.~Polat, Y.~Feng, E.~W.
  Moore, J.~{VanderPlas}, D.~Laxalde, J.~Perktold, R.~Cimrman, I.~Henriksen,
  E.~A. Quintero, C.~R. Harris, A.~M. Archibald, A.~H. Ribeiro, F.~Pedregosa,
  P.~{van Mulbregt}, and {SciPy 1.0 Contributors}}, {\em {{SciPy} 1.0:
  Fundamental Algorithms for Scientific Computing in Python}}, Nature Methods,
  17 (2020), pp.~261--272, \url{https://doi.org/10.1038/s41592-019-0686-2}.

\end{thebibliography}

\end{document}